\documentclass[11pt]{amsart}

\usepackage{amsmath,amsthm,amsfonts,amssymb,latexsym,mathrsfs,color,cite,tikz,cases,url,enumerate}
\usepackage{color}
\usepackage[pagebackref]{hyperref}
\usepackage[capitalize]{cleveref}
\usepackage{enumitem}
\usepackage{verbatim}

\def\lk{\mathrm{lk}}
\def\RR{\mathbb{R}}
\setlength{\textwidth}{15cm}
\setlength{\topmargin}{0cm}
\setlength{\oddsidemargin}{.5cm}
\setlength{\evensidemargin}{.5cm}
\setlength{\textheight}{21.5cm}
\allowdisplaybreaks

 
  \makeatletter
  \newcommand{\numberlike}[2]{%
     \expandafter\def\csname c@#1\endcsname{%
         \expandafter\csname c@#2\endcsname}%
  }
  \makeatother


  \def\DefaultNumberTheoremWithin{section}

  \theoremstyle{plain}
  \newtheorem{lemma}{Lemma}
     \numberwithin{lemma}{\DefaultNumberTheoremWithin}
     \labelformat{lemma}{Lemma~#1}
  \newtheorem{theorem}{Theorem}
     \numberwithin{theorem}{\DefaultNumberTheoremWithin}
     \numberlike{theorem}{lemma}
     \labelformat{theorem}{Theorem~#1}
  \newtheorem*{maintheorem*}{Corollary of Main Theorem}
  \newtheorem{corollary}{Corollary}
     \numberwithin{corollary}{\DefaultNumberTheoremWithin}
     \numberlike{corollary}{lemma}
     \labelformat{corollary}{Corollary~#1}
  \newtheorem{proposition}{Proposition}
     \numberwithin{proposition}{\DefaultNumberTheoremWithin}
     \numberlike{proposition}{lemma}
     \labelformat{proposition}{Proposition~#1}
  \newtheorem{conjecture}{Conjecture}
     \numberwithin{conjecture}{\DefaultNumberTheoremWithin}
     \numberlike{conjecture}{lemma}
     \labelformat{conjecture}{Conjecture~#1}

  \theoremstyle{definition}
  
     \numberwithin{definition}{\DefaultNumberTheoremWithin}
     \numberlike{definition}{lemma}
     \labelformat{definition}{Definition~#1}
  \newtheorem{question}{Question}
     \numberwithin{question}{\DefaultNumberTheoremWithin}
     \numberlike{question}{lemma}
     \labelformat{question}{Question~#1}
  
     \numberwithin{problem}{\DefaultNumberTheoremWithin}
     \numberlike{problem}{lemma}
     \labelformat{problem}{Problem~#1}
   
  \theoremstyle{remark}
  \newtheorem{remark}{Remark}
     \numberwithin{remark}{\DefaultNumberTheoremWithin}
     \numberlike{remark}{lemma}
     \labelformat{remark}{Remark~#1}
  
     \numberwithin{example}{\DefaultNumberTheoremWithin}
     \numberlike{example}{lemma}
     \labelformat{example}{Example~#1}

     \numberwithin{claim}{\DefaultNumberTheoremWithin}
     \numberlike{claim}{lemma}
     \labelformat{claim}{Claim~#1}
\labelformat{figure}{Figure~#1}
\labelformat{chapter}{Chapter~#1}
\labelformat{appendix}{Appendix~#1}
\labelformat{section}{Section~#1}
\labelformat{subsection}{Subsection~#1}

\title[Real rooted polynomials and $f$-vectors of simplicial complexes]{On a question about real rooted polynomials and $f$-polynomials of simplicial complexes}

\author{Lili Mu} 
\address{School of Mathematics and Statistics, Jiangsu Normal University, Xuzhou 221116, PR China} 
\email{lilimu@jsnu.edu.cn}

\author{Volkmar Welker}
\address{Philipps-Universit\"at Marburg, Fachbereich Mathematik und Informatik, 35032 Marburg, Germany}
\email{welker@mathematik.uni-marburg.de}

\begin{document}

\begin{abstract}
    For a polynomial $f(t) = 1+f_0t+\cdots +f_{d-1}t^d$ with positive integer
    coefficients Bell and Skandera ask if real rootedness of
    $f(t)$ implies that there is a simplicial complex with
    $f$-vector $(1,f_0,\ldots,f_{d-1})$. In this paper we
    discover properties implied by the 
    real rootedness of $f(t)$ in terms 
    of the
    binomial representation $f_i = \binom{x_{i+1}}{i+1}$, $i \geq 0$. We use these to provide a sufficient criterion
    for a positive answer to the question by Bell and Skandera. We also describe two further approaches to the conjecture and use one to verify that some well studied real rooted classical polynomials are f-polynomials.

    Finally, we provide a series of results showing that the set of f-vectors of simplicial complexes is closed under constructions also preserving real rootedness
    of their generating polynomials.
    \end{abstract}
\maketitle
\vspace{-0.5in}

\section{Introduction}

The results of this paper are motivated by a question of Bell and Skandera.

\begin{question}[Bell and Skandera, Question 1.1 in \cite{BS07}] \label{qu:bs} Let $f(t) = 1 + f_0t + \cdots + f_{d-1}t^d$ be a polynomial with positive integer coefficients. If $f(t)$ is real rooted, is $(1,f_0,\ldots, f_{d-1})$ the
$f$-vector $(f_{-1}^ \Delta,\ldots, f_{d-1}^\Delta)$ of a simplicial complex $\Delta$?
\end{question}

Recall that for a simplicial complex $\Delta$ the entries $f_i^\Delta$ of the $f$-vector are the number of $i$-dimensional faces of $\Delta$.
Thus in other words the conclusion of \ref{qu:bs} is that 
$f(t) = f^ \Delta(t)$ is the $f$-polynomial of a simplicial complex $\Delta$. 
We develop three approaches to this question,
which may be of independent
interest, and collect further
evidence for a positive answer to the question. 
The main results of the paper are.
\begin{itemize}
    \item For a real rooted polynomial $f(t) = 1 + f_0t + \cdots + f_{d-1}t^d$ with positive integer coefficients and real
    numbers $x_i \geq i$ such that
    $f(t) = 1 + \displaystyle{\sum_{i=1}^{d}} \binom{x_i}{i}t^i$ we have
    $x_1 \geq \cdots \geq x_d$ (\ref{xk-decreasing}). As a consequence 
    we provide in \ref{cor:hvector} a new proof of \cite[Theorem 4.2]{BS07} showing that
    real rootedness of $f(t)$ implies that the coefficient sequence is the $f$-vector
    of a multicomplex or equivalently the $h$-vector of a Cohen-Macaulay simplicial complex.
    \item If $f(t) = 1 + \displaystyle{\sum_{i=1}^{d}} \binom{x_i}{i} t^i$ is a 
    real rooted polynomial
    with positive integer coefficients, and
    $x_i \geq \lceil x_{i+1} \rceil$ then there is a simplicial complex
    $\Delta$ such that $f(t)$ is the $f$-polynomial of $\Delta$ (\ref{thm:main}).
    \item We exhibit a method based on Murai's
    \cite{Mur10} admissible vectors which can be
    used to demonstrate that classical real rooted
    polynomials (Eulerian, Stirling, derangement)
    are $f$-polynomials of simplicial complexes.
    For Eulerian polynomials this is a known result (see \cite{Gash98, ER94}).
    \item We show in \ref{sec:construction} that classical constructions preserving real rootedness 
    applied to $f$-polynomials $f^\Delta(t)$ of simplicial complexes yield the $f$-polynomial of another 
    simplicial complex.
\end{itemize}

\section{Notation, definitions and basic results}
For a finite set $\Omega$, a simplicial complex over ground set $\Omega$ is
a set of subsets of $\Omega$ such that $\sigma \subseteq \tau \in \Delta$
implies $\sigma \in \Delta$.
We call a $\sigma \in \Delta$ a face of $\Delta$
and the dimension of a face $\sigma$ is $\dim(\sigma) = \# \sigma -1$.
The dimension of $\Delta$ is $\max_{\sigma \in \Delta} \dim(\sigma)$.
If $\Delta$ is a $d$-dimensional simplicial complex then the
vector $f^\Delta = (f_{-1}^ \Delta,\ldots, f_{d}^ \Delta)$ where
$f_i^ \Delta = \# \big\{\, \sigma \in \Delta~\big|~\dim(\sigma) = i\,\big\}$
is called the $f$-vector of $\Delta$
and $f^ \Delta(t) = \displaystyle{\sum_{i=0}^{d+1}} f_{i-1}^ {\Delta} t^ {i}$ is called the $f$-polynomial of $\Delta$. 
Expanding $(1-t)^d \cdot f^ \Delta\big(\,\frac{1}{1-t}\,\big) = h^ \Delta(t) = \displaystyle{\sum_{i=0}^ {d+1} h_i^ \Delta t^ {i}}$ yields the $h$-polynomial
with coefficient sequence $h^\Delta = (h_0^ \Delta,\ldots, h_{d+1}^ \Delta)$ the $h$-vector of $\Delta$.
For a face $F\in \Delta$, denote by $\lk_{\Delta}(F)=\big\{\,G\,:\, F\cup G\in \Delta, F \cap G = \emptyset\,\big\}$ the link of $F$ in $\Delta$.

It is well-known that the $f$-vector of a simplicial complex can be characterized in terms of functions based upon the following expression of a positive integer
$m$ as a sum of binomial coefficients.
Given a positive integer $i$, there exists a unique representation of $m$ in the form
$$m=\binom{a_i}{i}+\binom{a_{i-1}}{i-1}+\cdots+\binom{a_j}{j},$$
where $a_i>a_{i-1}>\cdots>a_j\geq j\geq 1$.
The refer to this representation of $m$ as the $i$\textsuperscript{th} binomial expansion of $m$.
Define the functions $(\mu_i)_{i\geq 1}$ and $(\kappa_i)_{i\geq 1}$  on $\mathbb{N}$ by
$$\mu_i(m)=\binom{a_i}{i-1}+\binom{a_{i-1}}{i-2}+\cdots+\binom{a_j}{j-1}$$
and 
$$\kappa_i(m)=\binom{a_i-1}{i-1}+\binom{a_{i-1}-1}{i-2}+\cdots+\binom{a_j-1}{j-1}.$$
The characterization of $f$- and $h$-vectors of simplicial complexes are due to Kruskal\cite{Kru63}, Katona\cite{Kat68} independently and Macaulay \cite{Maca27} respectively. Note that in combinatorial terms Macaulay's result characterizes $f$-vectors of multicomplexes which is equivalent to being the $h$-vector of a Cohen-Macaulay simplicial complex, see e.g. \cite{Sta} for 
the equivalence and the definition of Cohen-Macaulay simplicial complexes.

\begin{theorem}[Kruskal-Katona, Macaulay]\label{f-h}
Let $f=(1, f_0,\dots,f_{d-1})$ be a vector of positive integers. Then
\begin{itemize}
  \item $f$ is an $f$-vector of a simplicial complex if and only if $\mu_{i+1}(f_i)\leq f_{i-1}$ for $0\leq i\leq d-1$.
  \item $f$ is an $h$-vector of a Cohen-Macaulay simplicial complex if and only if $\kappa_{i+1}(f_i)\leq f_{i-1}$ for $0\leq i\leq d-1$.
\end{itemize}
 
\end{theorem}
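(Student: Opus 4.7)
The plan is to recall the standard proofs of these two classical results, which both proceed via lexicographic compression. Each part has a necessity direction (the inequalities hold for any valid vector) and a sufficiency direction (any vector satisfying the inequalities is realized by an explicit construction).

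For necessity of the Kruskal-Katona statement, I would show that if $\Delta$ is any simplicial complex with $f_i^\Delta=m$, then the $(i-1)$-skeleton generated by the $(i+1)$-subsets of the $i$-faces — the shadow — has size at least $\mu_{i+1}(m)$. The key tool is the shadow-minimization lemma: among all families $\mathcal{F}$ of $m$ many $(i+1)$-subsets of $\mathbb{N}$, the colex-initial segment $C_{i}(m)$ minimizes $|\partial\mathcal{F}|$. A shifting/compression argument establishes this. A direct computation from the $i$\textsuperscript{th} binomial expansion $m=\binom{a_i}{i}+\cdots+\binom{a_j}{j}$ then shows $|\partial C_i(m)|=\mu_{i+1}(m)$. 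Since $\Delta$ is closed under taking subsets, $f_{i-1}^\Delta\geq |\partial\{\sigma\in\Delta:\dim\sigma=i\}|\geq \mu_{i+1}(f_i^\Delta)$.

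For sufficiency, I would construct $\Delta$ level by level. Let $C_i$ denote the first $f_i$ many $(i+1)$-subsets of $\mathbb{N}$ in colex order, and set $\Delta=\bigcup_i\{\sigma:\sigma\subseteq\tau\text{ for some }\tau\in C_i\}$. One needs to verify that $\partial C_i\subseteq C_{i-1}$; this follows because $|\partial C_i|=\mu_{i+1}(f_i)\leq f_{i-1}=|C_{i-1}|$ and because the shadow of a colex-initial segment is itself a colex-initial segment (an explicit fact about the cascade expansion). A short induction then shows $\Delta$ is a genuine simplicial complex with the prescribed $f$-vector.

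For the Macaulay statement, I would repeat the same strategy with multisets in place of sets. Identify multisets of size $i+1$ with monomials of degree $i+1$ in countably many variables, ordered by reverse lex; the analogous shadow-minimization lemma says the initial segment of $m$ such monomials has divisor-shadow of size $\kappa_{i+1}(m)$, again verified from the cascade expansion. The inductive construction produces a multicomplex realizing the prescribed sequence; invoking Stanley's correspondence between $f$-vectors of multicomplexes and $h$-vectors of Cohen-Macaulay simplicial complexes (for which we would cite \cite{Sta}) converts the statement to $h$-vectors. The main technical hurdle in both parts is the shadow-minimization lemma, whose cleanest proof is by a double compression argument on pairs of coordinates; the arithmetic identifying the shadow of an initial segment with $\mu_{i+1}(m)$ (resp.\ $\kappa_{i+1}(m)$) is then straightforward bookkeeping.
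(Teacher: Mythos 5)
This statement is one the paper does not prove at all: it is quoted as the classical Kruskal--Katona--Macaulay theorem and attributed to \cite{Kru63}, \cite{Kat68}, \cite{Maca27}, with the translation between $f$-vectors of multicomplexes and $h$-vectors of Cohen--Macaulay complexes delegated to \cite{Sta}. Your outline is the standard compression proof of exactly these results, and it is correct as a plan: necessity via the shadow-minimization lemma (colex initial segments of $(i+1)$-sets minimize the lower shadow, whose size for an initial segment is computed from the binomial expansion to be $\mu_{i+1}(m)$), sufficiency via taking the colex-initial families $C_i$ and checking $\partial C_i\subseteq C_{i-1}$ using that the shadow of a colex-initial segment is again colex-initial, and the multiset/monomial analogue plus Stanley's equivalence for the Macaulay half. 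Two small cautions. First, a wording slip: the shadow of the family of $i$-faces consists of their $i$-element subsets (the $(i-1)$-faces they contain), not ``$(i+1)$-subsets of the $i$-faces.'' Second, the real content sits precisely in the two ingredients you name but do not carry out: the compression argument proving shadow minimization (for the multiset case one must use the Macaulay/Clements--Lindstr\"om order on monomials, not an arbitrary reverse lexicographic order, and verify that the minimal divisor-shadow of $m$ degree-$(i+1)$ monomials is $\kappa_{i+1}(m)$), and the nontrivial equivalence between $O$-sequences and Cohen--Macaulay $h$-vectors from \cite{Sta}. Since the paper itself treats all of this as citable classical material, your proposal is an acceptable and essentially complete roadmap rather than a divergence from the paper.
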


\section{Real rooted polynomials with positive integer coefficients}

\subsection{First approach} As usual we consider the binomial coefficient $\binom{n}{k}$ as
a polynomial function in $n$, so that for $x \in \mathbb{R}$ we have
$$\binom{x}{k}=\frac{x(x-1)\cdots (x-k+1)}{k!}.$$
We also write $(x)_k$ for the denominator $x(x-1) \cdots (x-k+1)$.
Note, that the classical recurrence relation $\binom{x}{k}+\binom{x}{k-1}=\binom{x+1}{k}$
still holds and that for $0 < k \in \mathbb{N}$ and any real number 
$y \geq 0$ there is a unique real number $x \geq k-1$ such that $y=\binom{x}{k}$.
Thus for any polynomial $f(t)= 1 + \displaystyle{\sum_{i=1}^d} y_it^i$ with coefficients $y_i > 0$ there
are uniquely defined real numbers $x_i \geq i-1$ such that such that
$f(t)= 1+\sum_{i=1}^d \binom{x_i}{i}t^i$. We call $(x_1,\ldots,x_d)$ the binomial
representation of $f(t)$ or its coefficient sequence. Before we can formulate our first main results we need the following definition. We say that a polynomial
$a_0 + a_1t+\cdots +a_dt^ d$ of degree $d$ is ultra log-concave if
$$\Big[ \,\frac{a_i}{\binom{d}{i}}\,\Big]^ 2
\geq \frac{a_{i-1} a_{i+1}}{\binom{d}{i-1}\binom{d}{i+1}}$$ for $1 \leq i \leq d-1$.

\begin{theorem}\label{xk-decreasing}
Let $f(t) \in \mathbb{R}_{\geq 0}[t]$ with $f(t)=1+\displaystyle{\sum_{i=1}^ d}\binom{x_i}{i}t^{i}$ be an ultra log-concave polynomial and $x_d \geq d$. 
Then  
$x_1\geq x_2\geq\cdots\geq x_{d}$.
\end{theorem}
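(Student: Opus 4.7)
The plan is to prove the theorem in two stages: first establish the pointwise lower bound $x_i \geq d$ for every $i$, and then deduce the monotonicity by an upward induction that combines the ultra log-concavity at each index with this bound.

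For the first stage, set $u_i := \binom{x_i}{i}/\binom{d}{i}$ for $0 \leq i \leq d$, so that $u_0 = 1$ and the hypothesis $x_d \geq d$ gives $u_d = \binom{x_d}{d} \geq 1$. Ultra log-concavity of $f$ is exactly the log-concavity $u_i^2 \geq u_{i-1} u_{i+1}$ of the sequence $(u_i)$. Log-concavity together with $u_0 = 1$ and $u_d \geq 1$ rules out any internal zero (such a zero would propagate to an endpoint), and a positive log-concave sequence is unimodal. Since both endpoints satisfy $u_0, u_d \geq 1$, unimodality forces $u_i \geq 1$ throughout; equivalently $\binom{x_i}{i} \geq \binom{d}{i}$, and the strict monotonicity of $\binom{x}{i}$ in $x \geq i-1$ gives $x_i \geq d$.

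For the second stage, I will prove $x_j \geq x_{j+1}$ for $j = 1, \ldots, d-1$ by induction on $j$, under the inductive hypothesis $x_{j-1} \geq x_j$ (for $j = 1$ read as the tautology $a_0 = 1 = \binom{x_1}{0}$). From this hypothesis, monotonicity of binomial coefficients gives
$$a_{j-1} \;\geq\; \binom{x_j}{j-1} \;=\; \frac{j \, a_j}{x_j - j + 1}.$$
Substituting this into the ultra log-concavity inequality $a_j^2 \binom{d}{j-1}\binom{d}{j+1} \geq \binom{d}{j}^2 a_{j-1} a_{j+1}$ and simplifying (using $\binom{d}{j}^2/(\binom{d}{j-1}\binom{d}{j+1}) = (j+1)(d-j+1)/(j(d-j))$) produces
$$\frac{a_j}{a_{j+1}} \;\geq\; \frac{(j+1)(d-j+1)}{(d-j)(x_j - j + 1)}.$$
The desired monotonicity $x_j \geq x_{j+1}$ is equivalent to $a_j/a_{j+1} \geq (j+1)/(x_j - j)$, since $\binom{x_j}{j+1} = a_j(x_j - j)/(j+1)$. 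A direct computation shows that the derived bound is at least $(j+1)/(x_j - j)$ if and only if $(d-j+1)(x_j - j) \geq (d-j)(x_j - j + 1)$, which on expansion simplifies to $x_j \geq d$; this is precisely the conclusion of the first stage, so the induction closes.

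The main obstacle is bridging the gap between ultra log-concavity, which is a three-term condition on $a_{j-1}, a_j, a_{j+1}$, and the desired two-term comparison between $x_j$ and $x_{j+1}$. The inductive hypothesis $x_{j-1} \geq x_j$ absorbs the $a_{j-1}$ factor, while the pointwise bound $x_j \geq d$ from the first stage provides exactly the slack needed to close the remaining two-term inequality.
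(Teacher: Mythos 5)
Your proposal is correct and takes essentially the same approach as the paper: your first stage is the paper's step deriving $x_i\geq d$ for all $i$ from log-concavity of the normalized coefficients $\binom{x_i}{i}/\binom{d}{i}$ (the paper via the chord bound $\binom{x_i}{i}\geq\bigl[\binom{x_d}{d}\bigr]^{i/d}\binom{d}{i}$, you via unimodality with both endpoints at least $1$), and your second-stage bound $a_j/a_{j+1}\geq (j+1)(d-j+1)/\bigl((d-j)(x_j-j+1)\bigr)$ is, after clearing denominators, exactly the paper's key inequality $(d-j)\,(x_j)_j\,(x_j-j+1)\geq (d-j+1)\,(x_{j+1})_{j+1}$, which you then close directly using $x_j\geq d$ where the paper argues by contradiction using $x_{j+1}\geq d$. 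The only shaky point is your parenthetical that an internal zero of the normalized sequence would ``propagate to an endpoint'' (weak log-concavity allows, e.g., $1,1,0,0,1$), but this degenerate case lies outside the setting of strictly positive coefficients that the paper's own proof tacitly assumes as well.
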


\begin{proof}
By ultra log-concavity we have 
\begin{align}\label{NI}
\left[\frac{\binom{x_i}{i}}{\binom{d}{i}}\right]^2\geq \frac{\binom{x_{i-1}}{i-1}}{\binom{d}{i-1}}\frac{\binom{x_{i+1}}{i+1}}{\binom{d}{i+1}} \text{ for } 2 \leq i \leq d-1
\end{align}
and \begin{align} \label{N1}
\left[\frac{\binom{x_1}{1}}{\binom{d}{i}}\right]^2 \geq \frac{\binom{x_2}{2}}{\binom{d}{2}}.
\end{align}

From \eqref{NI} and \eqref{N1} it is not hard to derive that
$$\binom{x_i}{i}\geq \left[ \binom{x_d}{d}\right]^{i/d}\binom{d}{i}.$$

Since $x_d \geq d$ we have $\left[ \binom{x_d}{d}\right]^{i/d} \geq 1$
and hence $\binom{x_i}{i}\geq \binom{d}{i}$ and $x_i \geq d$ for all $i$.

The proof of the theorem is now by induction on $i$.
For the case $i=1$, by \eqref{N1}, we have $(d-1)x_1^2\geq dx_2(x_2-1)$.
Assume $x_1< x_2$ then
$$(d-1)x_1^2<(d-1)x_2^2\leq dx_2(x_2-1),$$
which contradicts \eqref{N1} and $x_1\geq x_2$ follows.

For the induction step let $i \geq 2$ and assume that $x_{i}\leq x_{i-1}$.
By \eqref{NI},
$$\frac{(x_i)_i^2}{(d)_i^2}\geq \frac{(x_{i-1})_{i-1}}{(d)_{i-1}}\frac{(x_{i+1})_{i+1}}{(d)_{i+1}}.$$
By the induction hypothesis,
$$\frac{(x_i)_i^2}{(d)_i^2}\geq \frac{(x_{i})_{i-1}}{(d)_{i-1}}\frac{(x_{i+1})_{i+1}}{(d)_{i+1}},$$
which equivalent to
\begin{align}\label{xi}
(d-i)\cdot (x_i)_i\cdot (x_i-i+1)\geq (d-i+1)\cdot (x_{i+1})_{i+1}.
\end{align}
Assume that $x_i<x_{i+1}$, then
$$(d-i)\cdot (x_i)_i\cdot (x_i-i+1)< (d-i)\cdot (x_{i+1})_i\cdot (x_{i+1}-i+1)\leq (d-i+1)\cdot (x_{i+1})_{i+1},$$
where the last inequality follows from $d\leq x_{i+1}$.
This contradicts \eqref{xi} and the proof is complete.
\end{proof}

By \cite[Example 2.3]{BH} and classical results by Newton the assumption of 
\ref{xk-decreasing} is satisfied in the following two cases.

\begin{corollary}
    \begin{itemize}
        \item[(i)] Let $f(t) \in \mathbb{R}_{\geq 0}[t]$ with $f(t)=1+\displaystyle{\sum_{i=1}^ d}\binom{x_i}{i}t^{i}$ and $x_d \geq d$. If $f(t)$ is real rooted then 
$x_1\geq x_2\geq\cdots\geq x_{d}$.
        \item[(ii)] Let $f(t,s) = 
        s^d + \displaystyle{\sum_{i=1}^ d}\binom{x_i}{i}t^{i}s^ {d-i}$ and $x_d \geq d$.
        If $f(s,t)$ is Lorentzian then 
$x_1\geq x_2\geq\cdots\geq x_{d}$.

    \end{itemize}
\end{corollary}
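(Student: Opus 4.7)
The plan is to derive both parts as immediate consequences of \ref{xk-decreasing} by verifying in each case that the hypothesis of ultra log-concavity holds. Once ultra log-concavity is established and $x_d \geq d$ is assumed, the theorem directly yields the monotonicity $x_1 \geq x_2 \geq \cdots \geq x_d$, so no further argument is needed.

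For part (i), I would invoke Newton's classical inequalities for polynomials with only real roots. Recall that if $f(t) = \sum_{i=0}^d a_i t^i$ is real rooted with nonnegative coefficients, then the normalized coefficient sequence satisfies
\[
\left(\frac{a_i}{\binom{d}{i}}\right)^{2} \geq \frac{a_{i-1}}{\binom{d}{i-1}} \cdot \frac{a_{i+1}}{\binom{d}{i+1}}
\]
for $1 \leq i \leq d-1$. This is exactly the ultra log-concavity condition as defined just before \ref{xk-decreasing}. Applying the theorem with $a_0 = 1$ and $a_i = \binom{x_i}{i}$ for $i \geq 1$ completes the argument.

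For part (ii), I would cite the structural property of Lorentzian polynomials recorded in \cite[Example 2.3]{BH}: if $f(t,s) = \sum_{i=0}^d c_i t^i s^{d-i}$ is a homogeneous Lorentzian polynomial of degree $d$, then its coefficient sequence $(c_i)$ is ultra log-concave in the sense above. Specializing $s = 1$ to the homogeneous polynomial in the statement gives the univariate polynomial $1 + \sum_{i=1}^{d}\binom{x_i}{i} t^i$, which inherits ultra log-concavity of its coefficients (with $c_0 = 1$). Again \ref{xk-decreasing} delivers the conclusion.

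There is no real obstacle here since both Newton's inequalities and the ultra log-concavity of Lorentzian coefficients are established classical results; the only point worth a moment's care is the boundary case $i=1$ of the ultra log-concavity condition, which involves the constant term $a_0 = c_0 = 1$ and is covered by both cited results. Thus the corollary is a clean application of \ref{xk-decreasing}.
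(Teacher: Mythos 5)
Your proposal is correct and matches the paper's own (one-line) argument: the paper likewise deduces both parts from \ref{xk-decreasing} by citing Newton's inequalities for real rooted polynomials in case (i) and the ultra log-concavity of Lorentzian coefficient sequences from \cite[Example 2.3]{BH} in case (ii). Nothing further is needed.
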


An immediate consequence of \ref{xk-decreasing} is the following result which is the main result in \cite{BS07}, where it is formulated in terms of
$f$-vectors of multicomplexes.

\begin{corollary} \label{cor:hvector}
Let $f(t)=1+\displaystyle{\sum_{i=1}^ d }f_{i-1}t^{i}\in \mathbb{N}(t)$ be a polynomial with only real zeros and
$f_i=\binom{x_{i+1}}{i+1}$. Then $f(t)$ is the $h$-polynomial of a Cohen-Macaulay simplicial complex.
\end{corollary}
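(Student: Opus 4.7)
The plan is to verify the Macaulay condition $\kappa_{i+1}(f_i) \leq f_{i-1}$ from \ref{f-h} for every $0 \leq i \leq d-1$, from which the Cohen-Macaulay half of \ref{f-h} directly produces the desired simplicial complex.

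First I would invoke \ref{xk-decreasing} to extract the monotonicity $x_1 \geq x_2 \geq \cdots \geq x_d$. To apply it one needs $x_d \geq d$ together with ultra log-concavity; the latter is exactly Newton's inequalities for real-rooted polynomials (noted right after \ref{xk-decreasing}), while $x_d \geq d$ is automatic because $f_{d-1} = \binom{x_d}{d}$ is a positive integer and $\binom{x}{d}$ is strictly increasing on $[d-1,\infty)$ with $\binom{d-1}{d}=0$ and $\binom{d}{d}=1$.

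The heart of the argument is a bridging lemma connecting the binomial-expansion definition of $\kappa_{i+1}$ with the continuous function $\binom{x}{i}$. Concretely, for any positive integer $m$ with $(i+1)$-th binomial expansion $m = \binom{a_{i+1}}{i+1}+\binom{a_i}{i}+\cdots+\binom{a_j}{j}$, I claim that $\kappa_{i+1}(m) \leq \binom{a_{i+1}}{i}$. The proof is by induction on $i$: setting $m' = m-\binom{a_{i+1}}{i+1}$, the displayed expansion restricts to the $i$-th binomial expansion of $m'$, so $\kappa_{i+1}(m) = \binom{a_{i+1}-1}{i} + \kappa_i(m')$. The inductive hypothesis gives $\kappa_i(m') \leq \binom{a_i}{i-1}$, and $a_i \leq a_{i+1}-1$ yields $\binom{a_i}{i-1} \leq \binom{a_{i+1}-1}{i-1}$; Pascal's rule $\binom{a_{i+1}-1}{i}+\binom{a_{i+1}-1}{i-1} = \binom{a_{i+1}}{i}$ then closes the induction.

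Finally, since $\binom{a_{i+1}}{i+1} \leq f_i = \binom{x_{i+1}}{i+1}$ and $\binom{x}{i+1}$ is monotone on $[i,\infty)$, one has $a_{i+1} \leq x_{i+1} \leq x_i$ by the first step. Monotonicity of $\binom{x}{i}$ on $[i-1,\infty)$ then gives
$$\kappa_{i+1}(f_i) \leq \binom{a_{i+1}}{i} \leq \binom{x_i}{i} = f_{i-1},$$
as desired. I expect the main obstacle to be the bridging lemma, which is precisely what is needed to cope with the fact that the $x_i$ may be non-integral while $\kappa_{i+1}$ is defined through the integral binomial expansion.
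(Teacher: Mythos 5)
Your proposal is correct and follows essentially the same route as the paper: apply \ref{xk-decreasing} (with ultra log-concavity coming from Newton's inequalities and $x_d\geq d$ from positivity of $f_{d-1}$) to get $a_{i+1}\leq x_{i+1}\leq x_i$, then verify Macaulay's condition via $\kappa_{i+1}(f_i)\leq\binom{a_{i+1}}{i}\leq\binom{x_i}{i}=f_{i-1}$ and conclude by \ref{f-h}. The only difference is that you spell out, by induction with Pascal's rule, the inequality $\kappa_{i+1}(f_i)\leq\binom{a_{i+1}}{i}$ and the hypothesis $x_d\geq d$, both of which the paper uses without comment.
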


\begin{proof}
Suppose
$$f_i=\binom{x_{i+1}}{i+1}=\binom{a_{i+1}}{i+1}+\binom{a_{i}}{i}+\cdots\binom{a_j}{j},$$
then by \ref{xk-decreasing}, we have $a_{i+1}\leq x_{i+1}\leq x_{i}$.
$$\kappa_{i+1}(f_i)=\binom{a_{i+1}-1}{i}+\binom{a_{i}-1}{i-1}+\cdots+\binom{a_j-1}{j-1}\leq \binom{a_{i+1}}{i}\leq \binom{x_{i}}{i}=f_{i-1}.$$
By \ref{f-h}, $f(t)$ is the $h$-polynomial of a simplicial complex.
\end{proof}

Now we are position to formulate and prove our main result supporting 
a positive answer to \ref{qu:bs}.

\begin{theorem} \label{thm:main}
Let $f(t)=1+\displaystyle{\sum_{i=1}^ d \binom{x_i}{i}t^i}\in \mathbb{N}(x)$ be a polynomial with only real roots.
If $x_{i-1}\geq \lceil x_i\rceil$ then $f(t)$ is the $f$-polynomial of a simplicial complex.
\end{theorem}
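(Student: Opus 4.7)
The plan is to verify the Kruskal--Katona inequality
$$\mu_{i+1}(f_i)\le f_{i-1}, \qquad 0 \leq i \le d-1,$$
and then invoke the first part of \ref{f-h}. The case $i=0$ is automatic since $\mu_1(m)=1=f_{-1}$, so fix $1\le i\le d-1$. Because $f_i\ge 1$ is an integer we have $x_{i+1}\ge i+1$, and because $x\mapsto\binom{x}{i+1}$ is monotone on $[i,\infty)$,
$$f_i=\binom{x_{i+1}}{i+1}\le\binom{\lceil x_{i+1}\rceil}{i+1}.$$

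The central step is the elementary lemma: \emph{for integers $k\ge 1$, $b\ge k$ and any integer $M$ with $0\le M\le\binom{b}{k}$, one has $\mu_k(M)\le\binom{b}{k-1}$.} I would prove this by induction on $k$. The base $k=1$ reduces to $\mu_1(M)\le 1$. For $k\ge 2$, let $b_k$ be the leading index of the $k$-th binomial expansion of $M$; maximality forces $b_k\le b$ together with $R:=M-\binom{b_k}{k}<\binom{b_k}{k-1}$. If $b_k=b$ then $M=\binom{b}{k}$ and $\mu_k(M)=\binom{b}{k-1}$. Otherwise $b_k<b$ and the induction hypothesis applied to $R$ yields $\mu_{k-1}(R)\le\binom{b_k}{k-2}$, whence
$$\mu_k(M)=\binom{b_k}{k-1}+\mu_{k-1}(R)\le\binom{b_k}{k-1}+\binom{b_k}{k-2}=\binom{b_k+1}{k-1}\le\binom{b}{k-1}$$
by Pascal and monotonicity. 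Applied to $M=f_i$ with $b=\lceil x_{i+1}\rceil$ this gives $\mu_{i+1}(f_i)\le\binom{\lceil x_{i+1}\rceil}{i}$.

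Finally, the hypothesis $x_i\ge\lceil x_{i+1}\rceil$ together with monotonicity of $\binom{\cdot}{i}$ on $[i-1,\infty)$ yields $\binom{\lceil x_{i+1}\rceil}{i}\le\binom{x_i}{i}=f_{i-1}$, closing the chain $\mu_{i+1}(f_i)\le f_{i-1}$. The only delicate point I anticipate is the inductive lemma above, which is short but demands some care in tracking how the binomial expansion interacts with a change of bound. It is worth noting that real rootedness of $f(t)$ never enters the argument directly: the strengthened ceiling-inequality $x_{i-1}\ge\lceil x_i\rceil$ does all the work, and real rootedness is present only as the motivating context via \ref{xk-decreasing}.
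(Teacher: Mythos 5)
Your proof is correct and takes essentially the same route as the paper's: both verify the Kruskal--Katona condition of \ref{f-h} through the chain $\mu_{i+1}(f_i)\le\binom{\lceil x_{i+1}\rceil}{i}\le\binom{x_i}{i}=f_{i-1}$, using only the ceiling hypothesis and monotonicity (the paper likewise never uses real rootedness in this step). The only difference is presentational: you isolate the shadow estimate $M\le\binom{b}{k}\Rightarrow\mu_k(M)\le\binom{b}{k-1}$ as a lemma proved by induction, whereas the paper argues directly on the binomial expansion of $f_{i-1}$, splitting into the cases $x_i=a_i$ and $a_i<x_i<a_i+1$ and invoking the bound $\mu_i(f_{i-1})\le\binom{a_i+1}{i-1}$ without separate proof.
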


\begin{proof}
Let
$$f_{i-1}=\binom{x_i}{i}=\binom{a_i}{i}+\binom{a_{i-1}}{i-1}+\cdots+\binom{a_j}{j},$$
with $x_i \geq i$ and $a_i > \cdots a_j \geq j \geq $ be the $i$\textsuperscript{th} binomial expansion of $f_{i-1}$.
If follows that $a_i\leq x_i< a_i+1$.
If $a_i= x_i$, then $f_{i-1}=\binom{x_i}{i}=\binom{a_i}{i}$.
$$\mu_i(f_{i-1})=\binom{x_i}{i-1}=\binom{\lceil x_i\rceil}{i-1}\leq \binom{x_{i-1}}{i-1}=f_{i-2}.$$
If $a_i< x_i<a_i+1$ then $a_i+1=\lceil x_i\rceil \leq x_{i-1}$.
Hence $$\mu_i(f_{i-1})=\binom{a_i}{i-1}+\binom{a_{i-1}}{i-2}+\cdots+\binom{a_j}{j-1}\leq \binom{a_i+1}{i-1}\leq \binom{x_{i-1}}{i-1}=f_{i-2}.$$
By \ref{f-h}, $f(t)$ is the $f$-polynomial of a simplicial complex.
\end{proof}

Data suggests that with this approach the
results from \ref{cor:euler} can be deduced.
Nevertheless, we we not able to verify the assumptions
of \ref{thm:main} for any of the sequences
treated in the corollary.

\begin{remark}
Note that the assumptions of \ref{thm:main} are not always satisfied. 
For example, the polynomial $f(t)=1+4t+5t^2+2t^3 = (x+1)^ 2(2x+1)$
has only real zeros,
but when expressed as $f(t) = 1+\binom{x_1}{1}t + \binom{x_2}{2} +
\binom{x_3}{3}t^ 3$
then $x_1 = 4$, $x_2 = \frac{1}{2} (1+\sqrt{41}) = 3.701562118 \cdots$
and $x_3 = \frac{1}{3}\,\sqrt [3]{162+3\,\sqrt {2913}}+{\frac {1}{\sqrt [3]{162+3\,\sqrt 
{2913}}}}+1 = 3.434841368\cdots$.
It follows that $$x_{2} = 3.701562118 \not\geq 4 = \lceil 3.434841368\cdots \rceil = \lceil x_3 \rceil.$$ 
On the other hand $(1,4,5,2)$ is the $f$-vector of the simplicial complex
given by two triangles glued along an edge.
\end{remark}

\begin{remark}
$f(t)= 1+4t+6t^2+3t^3 = 1+ \binom{4}{1}t + \binom{4}{2}t^2+\binom{3.xxxx}{3}t^3$
is not real rooted. It is the $f$-polynomial of the boundary of the tetrahedron with 
the interior of one triangle removed. On the other hand $x_i \geq \lceil x_{i-1} \rceil$
is still satisfied.
\end{remark}

\begin{remark}
Note that $f$-polynomial with $x_{i-1}\geq \lceil x_i\rceil$ can not get the real rootedness even not the log-concave. For example, the polynomial
$f(t) =1+10t+3t^2+t^3=1+\binom{10}{1}t+\binom{3}{2}t^2+\binom{3}{3}t^3$ is the $f$-polynomial of
a triangle together with $7$ isolated vertices. Clearly, $10 \geq 3\geq 3$ but
$9 = 3^2 \not\geq 1 \cdot 10 = 10$.
\end{remark}


\subsection{Second approach} Next we provide a second approach to a positive answer for
\ref{qu:bs}. 
Let 
$f(t) = 1 + \displaystyle{\sum_{i=1}^ d} f_{i-1} t^ i \in \mathbb{N}[t]$ and
$$f_{i-1} = \binom{x_i}{i} = \binom{a_i}{i}+\binom{a_{i-1}}{i-1}+\cdots+\binom{a_j}{j}, \text{ for } x_i \geq i \text{ and } a_i > \cdots > a_j \geq j \geq 1$$ the $i$\textsuperscript{th} binomial expansion of $f_{i-1}$.
For $i \geq 1$ we define $g_i$ and $h_{i-1}$ by
 $$g_i =\binom{a_i-1}{i}+\binom{a_{i-1}-1}{i-1}+\cdots+\binom{a_j-1}{j}$$
and $$h_{i-1} =\binom{a_i-1}{i-1}+\binom{a_{i-1}-1}{i-2}+\cdots+\binom{a_j-1}{j-1}.$$
It is easily seen that $h_0 = 1$ and $f_{i-1} = g_i + h_{i-1}$. In particular, we have 
$$ f(t) = 1+\sum_{i=1}^ d  f_{i-1}t^ i = \Big(\,1 + \sum_{i=1}^d 
g_i t^ i \,\Big)+ t \Big(\,1 + \sum_{i=2}^ d h_{i-1} t^{i-1}\,\Big).$$
We call the decomposition $f(t) = g(t)+th(t)$ with
$g(t) =1+\sum_{i=1}^d  g_it^ i$ and
$h(t) = 1+\sum_{i=1}^{d-1}  h_it^ i$ the recursive decomposition
of $f(t)$.

We will show that the a positive answer to the following conjecture implies a positive answer to \ref{qu:bs}.


\begin{conjecture} \label{con:second}
Let $f(t)=1+\displaystyle{\sum_{i=1}^ d}  f_{i-1} t^i\in \mathbb{N}[t]$ be a polynomial with only real zeros
and $f(t) = g(t)+th(t)$ its recursive decomposition with 
 $g(t) = 1+\displaystyle{\sum_{i=1}^d} g_it^i$ and $h(t) = 1+\displaystyle{\sum_{i=1}^{d-1}} h_it^i$ then
 $h_i\leq g_i$.
\end{conjecture}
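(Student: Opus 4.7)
The plan is to lift the monotonicity $x_1 \geq \cdots \geq x_d$ from \ref{xk-decreasing} to a termwise comparison of the $i$-th binomial expansion of $f_{i-1}$ and the $(i+1)$-th binomial expansion of $f_i$. Since $h_0 = g_0 = 1$ the conjecture reduces to showing $h_i \leq g_i$ for $1 \leq i \leq d-1$. Writing the $i$-th binomial expansion as $f_{i-1} = \binom{a_i}{i} + \binom{a_{i-1}}{i-1} + \cdots + \binom{a_j}{j}$ and the $(i+1)$-th binomial expansion as $f_i = \binom{b_{i+1}}{i+1} + \binom{b_i}{i} + \cdots + \binom{b_{j'}}{j'}$, we have $g_i = \sum_{k=j}^{i} \binom{a_k-1}{k}$ and $h_i = \sum_{k=j'}^{i+1} \binom{b_k-1}{k-1}$. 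Equivalently, the conjecture asks for $\kappa_{i+1}(f_i) \leq f_{i-1} - \kappa_i(f_{i-1})$, which sharpens the Macaulay bound $\kappa_{i+1}(f_i) \leq f_{i-1}$ used in the proof of \ref{cor:hvector}.

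The main step I would attempt is a coupling of the two expansions: prove $b_{k+1} \leq a_k$ for every index $k$ in the common range, together with a separate treatment of the tail when one expansion is shorter than the other. Granted this coupling, the termwise comparison
\[
h_i \;=\; \sum_k \binom{b_{k+1}-1}{k} \;\leq\; \sum_k \binom{a_k-1}{k} \;=\; g_i
\]
is immediate. The leading comparison $b_{i+1} \leq a_i$ follows from $x_{i+1} \leq x_i$, since $a_i$ is the largest integer with $\binom{a_i}{i} \leq \binom{x_i}{i}$ and similarly for $b_{i+1}$. For the subleading indices I would set up a secondary induction in which one strips the top binomial coefficient from both $f_{i-1}$ and $f_i$ and reruns the argument on the remainders $f_{i-1} - \binom{a_i}{i}$ and $f_i - \binom{b_{i+1}}{i+1}$, combined with a size control coming from Newton's inequalities applied to the consecutive triple $(f_{i-2}, f_{i-1}, f_i)$.

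The main obstacle is the boundary regime $\lfloor x_i \rfloor = \lfloor x_{i+1} \rfloor$, illustrated by the example in the remark following \ref{thm:main}, where consecutive $x_i$'s differ by strictly less than $1$. In this regime the leading indices $a_i$ and $b_{i+1}$ coincide and the inequality $h_i \leq g_i$ has to come from the subleading terms, so the argument cannot rest solely on the ultra log-concave consequence, which only delivers $x_i \geq x_{i+1}$. A strictly stronger use of real rootedness appears to be needed — most plausibly the full Newton inequality applied to each consecutive triple — to control how the fractional parts of $x_i$ and $x_{i+1}$ propagate into the lower-order terms of the two expansions. Translating this analytic control into the discrete coupling above is, I expect, where the bulk of the technical work would sit.
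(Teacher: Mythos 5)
First, a point of orientation: the statement you are trying to prove is left \emph{open} in the paper — \ref{con:second} is a conjecture, supported only by experiments, whose role is that a positive answer would imply a positive answer to \ref{qu:bs}. So there is no proof in the paper to compare against, and your submission does not close the gap either: it is a strategy outline whose decisive step is explicitly deferred. The parts you do carry out are sound. The reformulation $h_i\leq g_i \iff \kappa_{i+1}(f_i)\leq f_{i-1}-\kappa_i(f_{i-1})$ is correct (indeed $h_i=\kappa_{i+1}(f_i)$ and $g_i=f_{i-1}-\kappa_i(f_{i-1})$), and the leading comparison $b_{i+1}=\lfloor x_{i+1}\rfloor\leq\lfloor x_i\rfloor=a_i$ does follow from the corollary to \ref{xk-decreasing}, since positive integer coefficients force $x_d\geq d$.

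The genuine gap is exactly where you place it, and your proposed repair does not work as stated. You want a termwise coupling $b_{k+1}\leq a_k$ obtained by ``stripping the top binomial coefficient'' from $f_{i-1}$ and $f_i$ and rerunning the argument on the remainders $f_{i-1}-\binom{a_i}{i}$ and $f_i-\binom{b_{i+1}}{i+1}$; but those remainders are no longer coefficients of a real-rooted (or even log-concave) polynomial, so neither \ref{xk-decreasing} nor Newton's inequalities applied to the triple $(f_{i-2},f_{i-1},f_i)$ give you any handle on them — the inductive input you rely on simply does not descend to truncations, and you offer no substitute in the critical regime $\lfloor x_i\rfloor=\lfloor x_{i+1}\rfloor$, where the whole content of the conjecture lives. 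Two further cautions: the termwise coupling $b_{k+1}\leq a_k$ for all $k$ (plus tail handling) is strictly stronger than what is needed — by the paper's own subsequent proposition, $h_i\leq g_i$ is equivalent only to the lexicographic comparison $(b_{i+1},\ldots,b_r)\leq_{lex}(a_i,\ldots,a_j)$ — so you may be aiming at a statement that is harder than the conjecture or even false while the conjecture holds; and any argument using only $x_1\geq\cdots\geq x_d$ (ultra log-concavity) cannot suffice, since that conclusion already follows from log-concavity-type hypotheses weaker than real-rootedness, whereas the example $1+10t+3t^2+t^3$ in the paper shows how little the binomial-expansion inequalities constrain rootedness; some genuinely stronger consequence of real zeros must enter, and identifying it is precisely the unsolved part.
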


\begin{proposition} If \ref{con:second} holds then 
\ref{qu:bs} has a positive answer.
\end{proposition}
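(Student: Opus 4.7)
The plan is to realize the recursive decomposition $f(t) = g(t) + t h(t)$ geometrically. Granting~\ref{con:second} one has $h_i \leq g_i$ for every $i$; I would produce simplicial complexes $\Gamma' \subseteq \Gamma$ with $f^\Gamma = (1, g_1, \ldots, g_d)$ and $f^{\Gamma'} = (1, h_1, \ldots, h_{d-1})$, and then set
\[\Delta := \Gamma \cup \{\,\sigma \cup \{v\} \,:\, \sigma \in \Gamma'\,\}\]
for a fresh vertex $v \notin \Gamma$. Since $\Gamma'$ is a subcomplex of $\Gamma$, this $\Delta$ is a simplicial complex, and counting faces containing $v$ versus those not containing $v$ gives $f^\Delta(t) = f^\Gamma(t) + t f^{\Gamma'}(t) = g(t) + t h(t) = f(t)$, as desired.

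The technical heart is the numerical inequality
\[\mu_{i+1}(g_{i+1}) \leq h_i\]
for every $i$, which I would extract from the definitions alone. Writing the $(i+1)$-th binomial expansion of $f_i$ as $f_i = \sum_{k=j}^{i+1}\binom{a_k}{k}$, the formula $g_{i+1} = \sum_k \binom{a_k - 1}{k}$ is literally the standard $(i+1)$-th binomial expansion of $g_{i+1}$ whenever $a_k \geq k+1$ for every $k$, and then $\mu_{i+1}(g_{i+1}) = \sum_k \binom{a_k - 1}{k-1} = h_i$. At each degenerate index with $a_k = k$ the summand $\binom{a_k-1}{k} = 0$ drops out of $g_{i+1}$, while $\binom{a_k-1}{k-1} = 1$ still contributes to $h_i$; passing to the standard expansion then shows that $\mu_{i+1}(g_{i+1})$ is strictly smaller than $h_i$, by exactly the number of such degenerate indices.

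Combining this with the conjecture immediately yields the Kruskal--Katona conditions of~\ref{f-h} for both sequences. For $(1, g_1, \ldots, g_d)$ one has $\mu_{i+1}(g_{i+1}) \leq h_i \leq g_i$, and for $(1, h_1, \ldots, h_{d-1})$ the monotonicity of $\mu_{i+1}$ in its integer argument (smaller colex-initial segments have smaller shadows) together with $h_{i+1} \leq g_{i+1}$ gives $\mu_{i+1}(h_{i+1}) \leq \mu_{i+1}(g_{i+1}) \leq h_i$. Taking $\Gamma$ and $\Gamma'$ to be the colex-initial simplicial complexes on $\{1, 2, 3, \ldots\}$ realizing these $f$-vectors, the condition $h_i \leq g_i$ forces $\Gamma' \subseteq \Gamma$, and the construction of $\Delta$ above completes the argument. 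The main obstacle is the delicate handling of the key inequality when several of the $a_k$ reach the boundary value $k$; in those situations the naive formula and the standard binomial expansion of $g_{i+1}$ differ, so some case analysis is required.
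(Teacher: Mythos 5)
Your proof is correct, but it takes a genuinely different route from the paper's. The paper never builds a complex and never shows that the $g$- and $h$-sequences are themselves $f$-vectors: it verifies the Kruskal--Katona inequality for $f$ directly, by comparing the $i$\textsuperscript{th} expansion of $f_{i-1}$ (with tops $a_i>\cdots>a_j$) to the $(i+1)$\textsuperscript{st} expansion of $f_i$ (with tops $b_{i+1}>\cdots>b_r$), observing that $h_i\leq g_i$ amounts to $(b_{i+1},\dots,b_r)\leq_{lex}(a_i,\dots,a_j)$, and that this lexicographic comparison forces $\mu_{i+1}(f_i)\leq f_{i-1}$. You instead establish the unconditional inequality $\mu_{i+1}(g_{i+1})\leq h_i$ --- and your handling of the degenerate indices is the right one, since $a_k=k$ can occur only as a terminal block of the expansion, so deleting those terms leaves a legitimate $(i+1)$\textsuperscript{st} binomial expansion of $g_{i+1}$ while each deleted term contributes exactly $1$ to $h_i$ --- then combine it with \ref{con:second} and the monotonicity of $\mu_{i+1}$ (shadows of nested colex initial segments are nested; standard, but worth a reference such as \cite{And}) to obtain the conditions of \ref{f-h} for both $(1,g_1,\dots,g_d)$ and $(1,h_1,\dots,h_{d-1})$, and finally realize $f(t)=g(t)+th(t)$ by coning the compressed complex for $h$ over a fresh vertex inside the compressed complex for $g$. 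One small point to add for completeness: the only coefficient that can vanish is $g_d$ (exactly when $f_{d-1}\leq d$), which merely lowers $\dim\Gamma$ and does not disturb the construction; internal zeros are impossible since $h_i\geq 1$ whenever $f_i>0$, so \ref{con:second} gives $g_i\geq 1$ for $i\leq d-1$. Your route proves strictly more than the paper's: besides answering \ref{qu:bs} it shows, conditionally on \ref{con:second}, that both pieces of the recursive decomposition are $f$-vectors (a combinatorial companion to \ref{que:second}) and produces an explicit complex, whereas the paper's argument is shorter and purely numerical, resting only on the lexicographic comparison of the two expansions.
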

\begin{proof}
Let 
$$f_{i-1} = \binom{a_i}{i}+\binom{a_{i-1}}{i-1}+\cdots+\binom{a_j}{j}, \text{ for }  a_i > \cdots a_j \geq j \geq 1,$$ and 
$$f_{i} = \binom{b_{i+1}}{i+1}+\binom{b_{i}}{i}+\cdots+\binom{b_r}{r}, \text{ for } b_{i+1} > \cdots > b_r \geq r \geq 1$$
the $i$\textsuperscript{th} binomial expansion of $f_{i-1}$ and the $(i+1)$\textsuperscript{st} binomial expansion of $f_i$.
Then  $$g_i=\binom{a_i-1}{i}+\binom{a_{i-1}-1}{i-1}+\cdots+\binom{a_j-1}{j}$$
and $$h_i=\binom{b_{i+1}-1}{i}+\binom{b_{i}-1}{i}+\cdots+\binom{b_r-1}{r-1}.$$
It follows that $h_i\leq g_i$ if and only if 
 $(b_{i+1},b_{i},\dots,b_r)\leq_{lex} (a_i,a_{i-1},\dots,a_j)$ in the lexicographic order $\leq_{lex}$.
 Note that $$\mu_{i+1}(f_{i}) = \binom{b_{i+1}}{i}+\binom{b_{i}}{i-1}+\cdots+\binom{b_r}{r-1}.$$
 If  $(b_{i+1},b_{i},\dots,b_r)\leq_{lex} (a_i,a_{i-1},\dots,a_j)$ then we have $\mu_{i+1}(f_{i})\leq f_{i-1}$.
 The assertion now follows.

\end{proof}

Based on experiments suggesting a positive answer we ask the following question.

\begin{question}\label{que:second}
    Let $f(t)=1+\displaystyle{\sum_{i=1}^d} f_{i-1} t^i\in \mathbb{N}[t]$ be a polynomial with only real zeros
and $f(t) = g(t)+th(t)$ its recursive decomposition then
 $g(t)$ and $h(t)$ both have only real zeros.
\end{question}

\subsection{Third approach} Another technique to prove that a given vector is an $f$-vector of a simplicial complex is the use of admissible vectors, as introduced by Murai in \cite{Mur10}.
Let $f=(1, f_0,\dots,f_{d-1})\in \mathbb{Z}^{d+1}$ be the $f$-vector of a simplicial complex.
We say that a vector $\alpha=(0,1, \alpha_0,\dots,\alpha_{d-2})\in \mathbb{Z}^{d+1}$ is a {\it basic admissible vector} of $f$ if
\begin{itemize}
  \item[(BA1)] $(1, \alpha_0,\dots,\alpha_{d-2}) \in \mathbb{Z}^{d}$ is an $f$-vector of a simplicial complex;
  \item[(BA2)] $f_i\geq \alpha_i$ for $i=0,1,\dots, d-2$.
\end{itemize}
A vector $\beta\in \mathbb{Z}^{d+1}$ is {\it admissible} to $f$ if there exists a sequence of vectors $\beta_1,\dots,\beta_t\in\mathbb{Z}^{d+1}$
such that \begin{itemize}
            \item[(A1)] $\beta=\beta_1+\cdots+\beta_t$;
            \item[(A2)] $\beta_i$ is a  basic admissible vector of $f+\beta_1+\cdots+\beta_{i-1}$ for $i=1,2,\dots,t$.
          \end{itemize}

\begin{lemma}[Lemma 3.1 (1)(3)\cite{Mur10}]\label{admissible}
Let $f=(1, f_0,\dots,f_{d-1})\in \mathbb{Z}^{d+1}$ be the $f$-vector of a simplicial complex
and let $\alpha=(0,1, \alpha_0,\dots,\alpha_{d-2}), \beta=(0,1,\beta_0,\ldots, \beta_{d-2}) \in \mathbb{Z}^{d+1}$ be admissible to $f$. Then 
\begin{itemize}
    \item [(i)] $f+\alpha$ is the $f$-vector of a simplicial complex.
    \item[(ii)] $\alpha+\beta$ is admissible to $f$.
    \end{itemize}
\end{lemma}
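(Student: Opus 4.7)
My plan is to prove the two parts by induction on the length of the admissibility decomposition of $\alpha$ and by concatenating admissibility sequences, respectively. The central geometric input is \ref{f-h} together with the classical fact that the realizing complexes can be chosen to be colex-compressed on a common vertex set.

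For part (i), I would induct on the length $t$ of a fixed decomposition $\alpha = \beta_1 + \cdots + \beta_t$. The base case $t=1$, in which $\alpha$ itself is basic admissible to $f$, is the essential geometric step. Using \ref{f-h}, I would take a colex-compressed simplicial complex $\Gamma$ with $f$-vector $f$ on the vertex set $\{1,\ldots,f_0\}$, and a colex-compressed simplicial complex $\Sigma$ with $f$-vector $(1,\alpha_0,\ldots,\alpha_{d-2})$ on $\{1,\ldots,\alpha_0\}$. Since the colex-initial $i$-subsets of $\{1,\ldots,f_0\}$ nest according to cardinality, the inequalities $\alpha_i \le f_i$ from (BA2) force $\Sigma \subseteq \Gamma$. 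I would then adjoin a new vertex $v \notin \Gamma$ and set $\Gamma^{+} := \Gamma \cup \{\{v\}\} \cup \{\sigma \cup \{v\} : \sigma \in \Sigma\}$; because $\Sigma$ is a subcomplex of $\Gamma$, this is a genuine simplicial complex, and a direct count of faces by dimension shows that its $f$-vector is exactly $f+\alpha$. For the inductive step, the induction hypothesis provides a simplicial complex with $f$-vector $f + \beta_1 + \cdots + \beta_{t-1}$, to which I apply the base case with $\beta_t$ in place of $\alpha$.

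For part (ii), the plan is to concatenate the admissibility sequences of $\alpha$ and $\beta$. If $\alpha = \beta_1 + \cdots + \beta_s$ and $\beta = \gamma_1 + \cdots + \gamma_r$ are the defining decompositions, I claim that the concatenated list $\beta_1,\ldots,\beta_s,\gamma_1,\ldots,\gamma_r$ witnesses the admissibility of $\alpha+\beta$ to $f$. Property (A1) is automatic from $\alpha+\beta = \sum \beta_i + \sum \gamma_j$. For (A2) the only nontrivial check is that each $\gamma_j$ remains basic admissible to $f+\alpha+\gamma_1+\cdots+\gamma_{j-1}$ rather than merely to $f+\gamma_1+\cdots+\gamma_{j-1}$. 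Condition (BA1) is intrinsic to $\gamma_j$ and hence unaffected, while (BA2) becomes only easier because every $\beta_i$ has non-negative entries (its tail is an $f$-vector), so $\alpha \ge 0$ componentwise and the bound $\gamma_{j,k} \le (f+\gamma_1+\cdots+\gamma_{j-1})_k$ loosens rather than tightens when $\alpha$ is added.

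The step I expect to require the most care is the containment $\Sigma \subseteq \Gamma$ in the base case of part (i). Its validity hinges on a consistent choice of colex-compressed representatives on a common initial segment of the positive integers, so that smaller componentwise $f$-vectors automatically produce subcomplexes of larger ones. Once this nesting is established, the verification that $\Gamma^{+}$ has $f$-vector $f+\alpha$, and the inductive bookkeeping in both parts, are routine.
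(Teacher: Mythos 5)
Your proof is correct: the base case via colex-compressed realizations (so that (BA2) forces the smaller complex to be a subcomplex, over which one cones a new vertex), the induction on the length of the decomposition for (i), and the concatenation of witness sequences using nonnegativity of admissible vectors for (ii) all go through. The paper itself gives no proof — it quotes this as Lemma 3.1 (1)(3) of Murai \cite{Mur10} — and your argument is essentially the standard one used there, so there is nothing to flag.
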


The next proposition will be key to the proof that some real rooted classical polynomials are f-polynomials. The result is also an addition
to the list of results in \ref{sec:construction}.

\begin{proposition}\label{thm:f+f'}
Let $f(t)$ be an $f$-polynomial of a simplicial complex. Then $f(t)+tf'(t)$ is an $f$-polynomial of a simplicial complex.
\end{proposition}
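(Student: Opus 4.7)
The plan is to exhibit $\Delta'$ directly by duplicating the vertex set of $\Delta$. The guiding observation is that the coefficient of $t^i$ in $f(t) + t f'(t)$ equals $(i+1)\,f_{i-1}$, and the extra factor of $i$ in $tf'(t)$ is combinatorially the number of ways to choose a vertex inside a size-$i$ face; equivalently, one can write $tf'(t) = \sum_{\emptyset \neq \sigma \in \Delta} |\sigma|\, t^{|\sigma|}$, which counts \emph{pointed} faces of $\Delta$.

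Concretely, let $\Delta$ have vertex set $V$ and take a disjoint copy $V' = \{v' : v \in V\}$ in bijection with $V$. Define $\Delta'$ on $V \sqcup V'$ to consist of (a) every face of $\Delta$ together with (b) every set of the form $(\sigma \setminus \{v\}) \cup \{v'\}$ for $\sigma \in \Delta$ and $v \in \sigma$. I would first verify closure under subsets. A subset of a type-(a) face lies in $\Delta$. A subset of a type-(b) face $(\sigma \setminus \{v\}) \cup \{v'\}$ that avoids $v'$ lies in $\sigma \setminus \{v\} \subseteq \sigma \in \Delta$, and a subset of the form $\tau \cup \{v'\}$ with $\tau \subseteq \sigma \setminus \{v\}$ equals $((\tau \cup \{v\}) \setminus \{v\}) \cup \{v'\}$, which is again of type (b) since $\tau \cup \{v\} \in \Delta$.

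To count faces by size, note that each type-(b) face contains exactly one vertex from $V'$ (since $\sigma \subseteq V$), and this vertex $v'$ together with the face uniquely recovers the pair $(\sigma, v)$ via $\sigma = (\text{face} \setminus \{v'\}) \cup \{v\}$. Hence size-$i$ faces of type (b) are in bijection with pairs $(\sigma, v)$ where $\sigma \in \Delta$ has $|\sigma| = i$ and $v \in \sigma$, giving $i \cdot f_{i-1}$ such faces. Adding the $f_{i-1}$ size-$i$ faces of $\Delta$ itself yields $(i+1)\,f_{i-1}$ faces of each positive size $i$ in $\Delta'$, so $f^{\Delta'}(t) = f(t) + tf'(t)$ as required. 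There is no substantial obstacle; the entire content of the proof lies in spotting the vertex-duplication construction, after which closure and the face count are immediate.
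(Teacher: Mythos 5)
Your construction is correct, and it takes a genuinely different route from the paper. You realize $f(t)+tf'(t)$ by an explicit complex: duplicate the vertex set and adjoin, for each pointed face $(\sigma,v)$, the face $(\sigma\setminus\{v\})\cup\{v'\}$; your closure check and the bijection between size-$i$ type-(b) faces and pairs $(\sigma,v)$ with $|\sigma|=i$, $v\in\sigma$, are both sound, and since every type-(b) face contains exactly one primed vertex there is no overcounting, giving $(i+1)f_{i-1}$ faces of size $i$ as needed. The paper instead argues purely numerically via Murai's admissible vectors: for each vertex $v$ of $\Delta$ the shifted $f$-vector $(0,1,f(\lk_\Delta(v)))$ is a basic admissible vector, and summing over all vertices yields $\beta=(0,f_0,2f_1,\dots,df_{d-1})$, so $f+\beta$ is an $f$-vector by Murai's lemma. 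The combinatorial heart is the same in both arguments—your type-(b) faces containing $v'$ are exactly the cone with apex $v'$ over $\lk_\Delta(v)$, which is what the paper adds vector by vector—but your proof is self-contained and produces an explicit simplicial complex (containing $\Delta$ as a subcomplex) realizing $f+tf'$, whereas the paper's version is shorter given the admissible-vector machinery it has already set up and deliberately showcases that machinery as its ``third approach.''
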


\begin{proof}
Suppose that $f(t)=1+\displaystyle{\sum_{i=1}^{d}}f_{i-1}t^i$ is an $f$-polynomial of a simplicial complex $\Delta$.
Then $(1, f_0,\dots,f_{d-1})\in \mathbb{Z}^{d+1}$ is the $f$-vector of $\Delta$.
Let $\beta=(0,f_0, 2f_1,\dots,df_{d-1})\in \mathbb{Z}^{d+1}$.
Then by \ref{admissible}, for showing that $f(t)+tf'(t)$ is an $f$-polynomial of a simplicial complex it suffices to show
 $\beta$ is an admissible vector of $f$.
For each vertex $v_i$, $i=1,\ldots, f_0$, of 
$\Delta$ let
$\beta_i = (0,1,\beta_0^{(i)},\ldots, \beta_{d-2}^{(i)})$, where
$(1,\beta_0^{i},\ldots, \beta_{d-2}^{(i)})$ is
the $f$-vector of the link of $v_i$ in $\Delta$. Then (BA1) is satisfied for all
$\beta_i$ by definition. Since the $(i-1)$-faces
of the link of a vertex are in one to one correspondence to a subset of the $i$-faces
of $\Delta$, condition (BA2) follows for all
$\beta_i$. Indeed it follows for all
$\beta_i$ and for any complex
having an $f$-vector which is coordinatewise 
greater or equal to the one of $\Delta$.
Using this and \ref{admissible}(i),(ii) inductively 
it then follows that 
$f + \beta_1 + \cdots + \beta_{i-1}$ is a basic
admissible vector of $f+\beta_1 + \cdots + 
\beta_i$ for $i=1,\ldots, f_0$. 
In particular, \ref{admissible} implies that
$f+\beta$ is an $f$-vector.

Since a single $i$-face of $\Delta$ gives rise to exactly $i+1$ faces of dimension $i-1$ in the
links of its vertices it follows that
$\beta = (0,f_0,2f_1,\ldots, df_{d-1})$
and the assertion follows.
\end{proof}

The following corollary will turn out to be very useful when analyzing sequences defined through 
recursions.

 \begin{corollary}\label{f-vector}
 Let $(1,f_0,\dots,f_{d-1})$ be the $f$-vector of a simplicial complex. Then $$\mu_{i+1}\big(\,(i+2)f_i\,\big)\leq (i+1)\,f_{i-1} \text{ for } 1\leq i\leq d-1.$$
 \end{corollary}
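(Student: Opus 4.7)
The plan is to read this corollary as a direct consequence of \ref{thm:f+f'} together with the Kruskal--Katona side of \ref{f-h}. The key observation is a routine computation: if $f(t) = 1+\sum_{i=1}^{d} f_{i-1}t^{i}$ is the $f$-polynomial of a simplicial complex, then
\begin{equation*}
  f(t)+tf'(t) \;=\; 1+\sum_{i=1}^{d}(i+1)\,f_{i-1}\,t^{i},
\end{equation*}
so that its coefficient sequence is $(1,\,2f_0,\,3f_1,\,\ldots,\,(d{+}1)f_{d-1})$. In particular, the entry in position $i$ (the $t^{i}$-coefficient) is $(i+1)f_{i-1}$, and the entry in position $i+1$ is $(i+2)f_{i}$.

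Next, I would invoke \ref{thm:f+f'} to conclude that this coefficient sequence is itself the $f$-vector of some simplicial complex. At that point the Kruskal--Katona half of \ref{f-h} applies verbatim to this new $f$-vector: for each $1 \leq i \leq d-1$ one gets $\mu_{i+1}(\text{entry at position }i{+}1) \leq \text{entry at position }i$, which unpacks to
\begin{equation*}
  \mu_{i+1}\!\bigl((i+2)f_{i}\bigr) \;\leq\; (i+1)\,f_{i-1}.
\end{equation*}
This is exactly the stated inequality, so the proof is complete in essentially two lines once \ref{thm:f+f'} is in hand.

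There is no real obstacle here; the corollary is meant to repackage \ref{thm:f+f'} in the purely numerical language of $\mu_{i+1}$ so it can be applied in later arguments (for instance, to sequences defined by recursions where one has a concrete handle on the ratios $f_i/f_{i-1}$ but not on an explicit simplicial model). The only thing to be careful about in writing it up is the index bookkeeping between $f$-vectors of length $d+1$ and the Kruskal--Katona inequalities, and the observation that the condition $1 \leq i \leq d-1$ in the statement matches precisely the range $0 \leq i' \leq d-1$ in \ref{f-h} after the reindexing $i' = i$ applied to the coefficient vector of $f(t)+tf'(t)$.
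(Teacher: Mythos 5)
Your proposal is correct and is essentially identical to the paper's own proof: compute that $f(t)+tf'(t)$ has coefficient sequence $(1,2f_0,3f_1,\dots,(d+1)f_{d-1})$, apply \ref{thm:f+f'} to conclude this is an $f$-vector, and then read off the Kruskal--Katona inequalities from \ref{f-h}. Nothing is missing.
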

\begin{proof}
Basic calculus shows that for
$f(t) = 1+\sum_{i=1}^{d}f_{i-1}t^i$ we
have that $f(t)+tf'(t) = 
1+2f_0t+\cdots +(d+1)f_{d-1}$. By  \ref{thm:f+f'} it follows that $(1,2f_0,3f_1,\dots,(i+2)f_i,\dots,(d+1)f_{d-1})$ is the $f$-vector of a simplicial complex.
Then by \ref{f-h}, the result follows.
\end{proof}

We now want to apply these result to show that
Eulerian, Stirling and derangement polynomials are $f$-polynomials. Instead of giving three separate proofs, we consider arrays
$\big[\,T_{d,k}\,\big]_{d,k \in \mathbb{Z}}$ of integers which satisfies the following
conditions:

\begin{itemize}
    \item[(R1)] $T_{1,0}=1$,
    \item[(R2)] $T_{d,k}=0$ for all $d$ and $k$ which do not satisfy $0\leq k\leq d-1$,
    \item[(R3)] For $i=1,2,3$ there are integers $r^{(i)},s^{(i)},t^{(i)}$ such that
    \begin{itemize}
        \item[(i)] $r^{(1)} = 0$ and $t^ {(1)} =1$,
        \item[(ii)] $s^ {(i)} \leq t^{(i)}$, 
        \item[(iii)]  for all $0 \leq k \leq d-1$ and $d \geq 2$ we have $a_{d,k}^{(i)} :=r^{(i)}d+s^{(i)}k+t^{(i)} \geq 0$.
        \end{itemize}
    \item[(R4)] For $0 \leq k \leq d-1$
    and $d \geq 2$ we have
\begin{align}\label{triangle}
T_{d,k}=a_{d,k}^{(1)}T_{d-1,k}+a_{d,k}^{(2)}T_{d-1,k-1}+a_{d,k}^{(3)}T_{d-2,k-1}.
\end{align}
\end{itemize}

For the following choices of parameters we recover
Eulerian, Stirling and derangement numbers
and verify that (R1)-(R4) is satisfied.

\begin{itemize}
  \item {\sf (Eulerian)}  $[T_{d,k}]_{d\geq 1, k\geq 0}$ is the Eulerian triangle,
  with
  $T_{d,k}$ the number of permutations in $S_d$ with $k$ descents (see \cite[A008292]{Slo}) for $a_{d,k}^{(1)}=k+1
  = 0 \cdot d+1\cdot k + 1$, $a_{d,k}^{(2)}=
  d-k = 1 \cdot d + (-1) \cdot k + 0$ and $a_{d,k}^{(3)}=0 = 0 \cdot d + 0 \cdot k + 0$.
  \item {\sf (Stirling)} $[T_{d,k}]_{d\geq 1, k\geq 0}$ is the Stirling triangle of the second kind,
  with $T(d,k)$ the number of set partitions of
  $\{1,\ldots, d\}$ with $k+1$ blocks (see \cite[A008277]{Slo}) for $a_{d,k}^{(1)}=k+1 = 0 \cdot d+1\cdot k + 1$, $a_{d,k}^{(2)}=1= 0 \cdot d + 0 \cdot k + 1$ and $a_{d,k}^{(3)}=0 =0 \cdot d + 0 \cdot k + 0$.
  \item {\sf (Derangements)} $T_{d,k}$ is the number of derangements of $[d]$ having $k$ exceedances (see \cite[A046739]{Slo}) for $a_{d,k}^{(1)}=k+1= 0 \cdot d+1\cdot k + 1$, $a_{d,k}^{(2)}=d-k= 1 \cdot d + (-1) \cdot k + 0$ and $a_{d,k}^{(3)}=1 \cdot d+0 \cdot k +0$.
\end{itemize}

\begin{theorem} \label{thm:T-poly}
Let $[T_{d,k}]_{d,k \in \mathbb{Z}}$ be a triangular array defined satisfying \eqref{triangle}, (R1), (R2), (R3) and (R4).
Then the $d$\textsuperscript{th} row polynomial $T_d(t)=\displaystyle{\sum_{k\geq 0}}T_{d,k}t^k$ is an $f$-polynomial of a simplicial complex.
\end{theorem}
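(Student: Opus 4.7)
The plan is to induct on $d$, with base cases $d=1,2$ handled directly from (R1) and (R2). For the inductive step I assume that $T_{d-1}(t)$ and $T_{d-2}(t)$ are $f$-polynomials of simplicial complexes $\Delta_{d-1}$ and $\Delta_{d-2}$.

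First I rewrite the recursion (R4) at the level of generating polynomials by splitting each $a_{d,k}^{(i)} = r^{(i)} d + s^{(i)} k + t^{(i)}$ into its $k$-dependent and $k$-independent parts and using the identity $\sum_k k P_k t^k = t P'(t)$. After shifting indices in the two terms that involve $T_{d-1,k-1}$ and $T_{d-2,k-1}$, this produces
\begin{align*}
T_d(t) &= T_{d-1}(t) + s^{(1)}\cdot t\,T_{d-1}'(t) + A\cdot t\,T_{d-1}(t) + s^{(2)}\cdot t^2\,T_{d-1}'(t) \\
&\qquad + B\cdot t\,T_{d-2}(t) + s^{(3)}\cdot t^2\,T_{d-2}'(t),
\end{align*}
where $A = r^{(2)}d + s^{(2)} + t^{(2)}$ and $B = r^{(3)}d + s^{(3)} + t^{(3)}$ are nonnegative integers by (R3).

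The plan is then to realize each of the five correction summands as an admissible modification of the (padded) $f$-vector of $\Delta_{d-1}$ in the sense of \ref{admissible}. The summand $s^{(1)}\cdot tT_{d-1}'(t)$ is handled by the proof of \ref{thm:f+f'}, which exhibits $(0,f_0,2f_1,\ldots,df_{d-1})$ as an admissible vector; positive integer multiples remain admissible by \ref{admissible}(ii). The summand $tT_{d-1}(t)$ corresponds to coning $\Delta_{d-1}$ off a new vertex, giving the basic admissible vector $(0,1,T_{d-1,1},\ldots,T_{d-1,d-2})$ whose (BA1) is exactly the induction hypothesis on $\Delta_{d-1}$. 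The remaining summands $t^2T_{d-1}'(t)$, $tT_{d-2}(t)$ and $t^2T_{d-2}'(t)$ are handled analogously as cone or cone-of-link operations on $\Delta_{d-1}$ and $\Delta_{d-2}$. Combining all contributions via \ref{admissible}(ii) and invoking \ref{admissible}(i) then shows that the coefficient sequence of $T_d(t)$ is the $f$-vector of a simplicial complex, completing the induction.

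The principal technical obstacle is verifying condition (BA2) for the cone-style contributions: a single basic admissible vector $(0,1,T_{d-1,1},\ldots,T_{d-1,d-2})$ requires the running $f$-vector to coordinate-wise dominate $(T_{d-1,k})_k$, which is typically not monotone (it is unimodal for Eulerian and Stirling numbers, and drops to $0$ at the top in all three cases). I would overcome this by ordering the admissible steps carefully: apply the "link-inflation" correction $s^{(1)}\cdot tT_{d-1}'(t)$ first, since by \ref{f-vector} this inflates each coordinate enough to dominate every subsequent cone contribution; then the cone additions on $\Delta_{d-1}$; and finally the smaller $T_{d-2}$-pieces, whose dominance by the running $f$-vector is inherited from the induction together with the nonnegativity constraints of (R3).
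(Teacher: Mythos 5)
Your polynomial identity for $T_d(t)$ is correct, and the overall framework (induction on $d$ plus Murai's admissible vectors and \ref{thm:f+f'}) is in the spirit of the paper's third approach, but the core step fails. The decomposition into five summands has coefficients $s^{(2)}$ and $s^{(3)}$ that the axioms allow to be negative, and they \emph{are} negative in the paper's motivating examples: for the Eulerian and derangement triangles $a^{(2)}_{d,k}=d-k$, so $s^{(2)}=-1$ and the summand $s^{(2)}t^2T_{d-1}'(t)$ is a genuinely negative correction. Admissible vectors in the sense of \ref{admissible} are sums of basic admissible vectors $(0,1,\alpha_0,\ldots,\alpha_{d-2})$ with nonnegative entries; they can only be \emph{added} to an $f$-vector, never subtracted. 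So ``realize each of the five correction summands as an admissible modification'' cannot be carried out as stated, and no reordering of the steps repairs it; you would have to regroup the $k$-dependent and $k$-independent parts back into the single nonnegative quantity $a^{(2)}_{d,k}T_{d-1,k-1}$, at which point the cone-plus-derivative picture underlying your five summands disappears.

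Second, the BA2 verification --- the part you yourself flag as the principal obstacle --- is left as an unsupported hope. \ref{f-vector} is a Kruskal--Katona type inequality $\mu_{i+1}\big((i+2)f_i\big)\le (i+1)f_{i-1}$, not a coordinatewise domination statement, so it does not show that applying $s^{(1)}tT_{d-1}'(t)$ first makes the running $f$-vector dominate the later cone contributions; moreover (R3) allows $s^{(1)}=0$, in which case there is no inflation at all. For comparison, the paper's proof avoids both problems by staying at the numerical level: it verifies $\mu_k(T_{d,k})\le T_{d,k-1}$ directly, using subadditivity of $\mu_k$ and splitting each term $a^{(i)}_{d,k}T_{\cdot,\cdot}$ according to the sign of $s^{(i)}$ --- when $s^{(i)}\le 0$ plain subadditivity plus the induction hypothesis suffices, and when $s^{(i)}>0$ it writes $a^{(i)}_{d,k}=s^{(i)}(k+1)+(r^{(i)}d+t^{(i)}-s^{(i)})$, nonnegative by (R3)(ii), and invokes \ref{f-vector}; that is the only place where \ref{thm:f+f'} and the admissible-vector machinery actually enter.
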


\begin{proof}
Note that by (R2) and (R4) $T_{d,0}= a_{d,0}^{(1)} T_{d-1,0}$ for $d \geq 2$. Now (R1) and (R3)(i) imply $T_{d,0} = 1$ for all $d\geq 1$.

For $d=1$ condition (R1) and (R2) imply that 
$T_1(t) = 1$ is the $f$-polynomial of the simplicial complex $\{ \emptyset \}$. 

For $d=2$ we have by (R1), (R2) and (R3)(iii) that $T_{2,1} = a_{2,1}^{(2)} \geq 0$. And $(1,a_{2,1}^ {(2)})$ is the $f$-polynomial
of the simplicial complex $a_{2,1}^ {(2)}$
simplices of dimension $0$ and the empty set. 

Suppose that $d \geq 3$ and for $n \leq d-1$ 
all $T_n(t)$ are $f$-polynomials 
of simplicial complexes.
By \ref{f-h} we need to show that for $1 \leq k \leq d-1$
we have $\mu_k(T_{d,k)} \leq T_{d,k-1}$.

We will repeatedly use the fact 
that $\mu_k$ is a subadditive function on
non-negative integers. For that we first extend
$\mu_k$ to all
non-negative integers by setting $\mu_k(0) = 0$. It is easily checked that the  subadditivity of $\mu_k$ on positive integers (see \cite[Theorem 7.4.2]{And} or \cite{Cle74}) extends. We mark arguments using the subadditivity by $(sub)$.

\medskip

\noindent {\sf Claim:} 
For $d \geq 3$ and $0 \leq k \leq d-1$ we have 
\begin{itemize}
    \item[(1)] $\mu_k\left(a_{d,k}^{(1)}T_{d-1,k}\right)
\leq a_{d,k-1}^{(1)}T_{d-1,k-1}$,
    \item[(2)] $\mu_k\left(a_{d,k}^{(2)}T_{d-1,k-1}\right)
\leq a_{d,k-1}^{(2)}T_{d-1,k-2}$,
 \item[(3)] $\mu_k\left(a_{d,k}^{(3)}T_{d-2,k-1}\right)
\leq a_{d,k-1}^{(3)}T_{d-2,k-2}$.
\end{itemize}

\medskip

We prove the inequality (1) of the claim. The others then follow by similar reasoning.

\noindent {\sf Case:} $s^{(1)}\leq 0$.

By induction on $d-1$ and since by
 (R3)(iii) $a_{d,k}^{(1)}\geq 0$, we have
 \begin{align*}
\mu_k\left(a_{d,k}^{(1)}T_{d-1,k}\right)&=
\mu_k\left((r^{(1)}d+s^{(1)}k+t^{(1)})\,T_{d-1,k}\right)\\
&\overset{(sub)}{\leq} (r^{(1)}d+s^{(1)}k+t^{(1)})\,\mu_k(T_{d-1,k})\\
&\leq (r^{(1)}d+s^{(1)}(k-1)+t^{(1)})\,T_{d-1,k-1}\\
&=a_{d,k-1}^{(1)}T_{d-1,k-1}.
\end{align*}

\noindent {\sf Case:} $s^{(1)}>0$.

By $s^{(1)}\leq t^{(1)}$ and $a_{d,k}^{(1)}\geq 0$,
\begin{align*}
\mu_k\left(a_{d,k}^{(1)}\,T_{d-1,k}\right)&=
\mu_k\left((r^{(1)}d+s^{(1)}k+t^{(1)})\,T_{d-1,k}\right)\\
&=\mu_k \left((r^{(1)}d+s^{(1)}(k+1)+t^{(1)}-s^{(1)})\,T_{d-1,k}\right)\\
&\overset{(sub)}{\leq} \mu_k \left(s^{(1)}(k+1)\,T_{d-1,k}\right)+ \mu_k \left((r^{(1)}d+t^{(1)}-s^{(1)})\,T_{d-1,k}\right) \\
&{\leq} (r^{(1)}d+s^{(1)}k+t^{(1)}-s^{(1)})\,T_{d-1,k-1}\\
&=a_{d,k-1}^{(1)}T_{d-1,k-1},
\end{align*}
where the last inequality follows by induction on $d-1$ and
\ref{f-vector}. This completes the proof of the claim. 
Using (R4) and (R3)(iii) we conclude
\begin{align*}
\mu_k(T_{d,k})&=\mu_k\left(a_{d,k}^{(1)}T_{d-1,k}+a_{d,k}^{(2)}T_{d-1,k-1}+a_{d,k}^{(3)}T_{d-2,k-1}\right)\\
&\overset{(sub)}{\leq} \mu_k\left(a_{d,k}^{(1)}T_{d-1,k}\right)+\mu_k\left(a_{d,k}^{(2)}T_{d-1,k-1}\right)+\mu_k\left(a_{d,k}^{(3)}T_{d-2,k-1}\right) \\
&\overset{\text{(Claim)}}{\leq} a_{d,k-1}^{(1)}T_{d-1,k-1}+a_{d,k-1}^{(2)}T_{d-1,k-2}+a_{d,k-1}^{(3)}T_{d-2,k-2}\\
&= T_{d,k-1}.
\end{align*}

The assertion now
follows.
\end{proof}

Since we have already verified conditions
(R1)-(R4) for these classical polynomials 
the following corollary is an immediate consequence of \ref{thm:T-poly}.
The case of Eulerian polynomial is
known by work of Edelman and Reiner \cite{ER94} and Gasharov \cite{Gash98}. For the other two we are not aware of any reference.

\begin{corollary} \label{cor:euler}
The Eulerian polynomial, the Stirling polynomial of the second kind and the derangement polynomial are $f$-polynomials of simplicial complexes.
\end{corollary}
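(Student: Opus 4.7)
The plan is to simply invoke \ref{thm:T-poly} three times, once for each of the three classical triangles. All the real work—showing that any triangular array satisfying (R1)--(R4) yields an $f$-polynomial in every row—has already been done, so the task reduces to a bookkeeping check that each of the Eulerian, Stirling and derangement triangles fits the hypotheses of that theorem.

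First I would recall the three standard recurrences: for Eulerian numbers $T_{d,k}=(k+1)T_{d-1,k}+(d-k)T_{d-1,k-1}$, for Stirling numbers of the second kind $T_{d,k}=(k+1)T_{d-1,k}+T_{d-1,k-1}$, and for the derangement triangle of A046739 the recurrence $T_{d,k}=(k+1)T_{d-1,k}+(d-k)T_{d-1,k-1}+d\,T_{d-2,k-1}$. For each of these I would read off the triples $(r^{(i)},s^{(i)},t^{(i)})$ exactly as listed in the bulleted discussion immediately before \ref{thm:T-poly}. In all three cases (R1) and (R2) are the standard initial/boundary conditions of the triangles, and (R4) is just the recurrence rewritten in the uniform shape \eqref{triangle}.

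The only substantive check is (R3). Condition (R3)(i), namely $r^{(1)}=0$ and $t^{(1)}=1$, holds in every case because the first coefficient is $a_{d,k}^{(1)}=k+1$. Condition (R3)(ii), $s^{(i)}\leq t^{(i)}$, is clear: for the Eulerian and derangement triangles the $s$'s are $1,-1,0$ and the $t$'s are $1,0,0$; for Stirling the $s$'s are $1,0,0$ and the $t$'s are $1,1,0$. Condition (R3)(iii), that $a_{d,k}^{(i)}\geq 0$ for $0\leq k\leq d-1$, is immediate from $k+1\geq 0$, $d-k\geq 1$, $1\geq 0$ and $d\geq 0$ respectively. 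With (R1)--(R4) verified, \ref{thm:T-poly} applies to each triangle, and the row polynomial $T_d(t)$ is an $f$-polynomial of a simplicial complex.

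The \emph{hard part} is not in writing this corollary at all—it was absorbed into the proof of \ref{thm:T-poly}, where the subadditivity of $\mu_k$ together with \ref{f-vector} was used to bound $\mu_k(a_{d,k}^{(i)}T_{\cdot,\cdot})$ by $a_{d,k-1}^{(i)}T_{\cdot,\cdot}$. All the present corollary needs is the remark that three different families of classical numbers all fit into that uniform framework, so I would end the proof proposal with the one-line observation that the parameter tables above plug directly into \ref{thm:T-poly}, together with a pointer to \cite{ER94,Gash98} for the prior state of the art in the Eulerian case.
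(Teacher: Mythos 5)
Your proposal is correct and coincides with the paper's own argument: the paper records the parameter triples for the Eulerian, Stirling and derangement triangles in the bulleted list before \ref{thm:T-poly}, notes that (R1)--(R4) hold there, and then states \ref{cor:euler} as an immediate consequence, exactly as you do. Your explicit verification of (R3)(ii)--(iii) for the three parameter tables is accurate, so nothing is missing.
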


The conjecture by Neggers and Stanley states
that the $W$-polynomial $W_P(t)$, also called $P$-Eulerian 
polynomial, of a poset $P$ is real rooted
(see for example \cite{Br89} for definitions
and the conjecture). 
If $P$ is the $n$-element antichain then
its $P$-Eulerian polynomial is the 
$n$\textsuperscript{th} Eulerian polynomial.
$P$-Eulerian polynomials have constant coefficient $1$ if and only if the poset if naturally labeled. This is the case for which
Neggers formulated the conjecture originally. The general Neggers-Stanley conjecture was 
shown to be false by Br\"and\'en \cite{Bra}
and in the naturally labeled case by 
Stembridge \cite{Ste}. 
Assuming the answer to \ref{qu:bs} is positive, then for a naturally labeled poset
whose $P$-Eulerian polynomial $W_P(t)$ is real rooted there is a simplicial complex whose $f$-polynomial is $W_P(t)$. 
Checking the
explicit counterexamples from \cite{Ste} 
conjecture one finds that all satisfy the
Kruskal-Katona conditions from  \ref{f-h}.
This makes us raise the following question.

\begin{question} Let $P$ be a naturally
labeled poset. Is there always a simplicial
complex $\Delta_P$ for which the $f$-polynomial is the $W$-polynomial $W_P(t)$ of $P$?
\end{question}

The result by Edelman and Reiner \cite{ER94} and Gasharov \cite{Gash98}, reproved in 
\ref{cor:euler}, answers the question positively in case $P$ is an antichain. 

Since $f$-vectors are not always unimodal,
the question is independent with respect to
the still open questions about unimodality 
and log-concavity of $W_P(t)$.

\section{Preserving real rootedness preserves the \texorpdfstring{$f-$}~vector property} \label{sec:construction}

In this section we collect results which show that well studied
constructions on polynomials, that preserve real rootedness, also preserve
the property of being the $f$-polynomial of a simplicial 
complex. 

We start with the dilation of the variable. Elementary analysis shows that if $f(t)$ is a polynomial
with only real roots then same holds for $f(ct)$ for
any $c \in \RR$.

\begin{proposition}
Let $f(t)$ be an $f$-polynomial of a simplicial complex. Then for any positive $c \in \mathbb{Z}$ the polynomial $f(ct)$ is an $f$-polynomial of a simplicial complex.
\end{proposition}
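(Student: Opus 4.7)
The plan is to construct an explicit simplicial complex whose $f$-polynomial is $f(ct)$, via a vertex replication construction. Let $\Delta$ be a simplicial complex on ground set $V$ with $f$-polynomial $f(t) = 1 + \sum_{i=1}^{d} f_{i-1}^\Delta\, t^{i}$. I would define a new complex $\Delta^{(c)}$ on the ground set $V \times \{1,\ldots,c\}$ by declaring that a subset $\sigma \subseteq V \times \{1,\ldots,c\}$ is a face of $\Delta^{(c)}$ if and only if the projection $\pi \colon V \times \{1,\ldots,c\} \to V$ is injective on $\sigma$ and $\pi(\sigma) \in \Delta$. Informally, each vertex of $\Delta$ is replaced by $c$ parallel copies, and a collection of copies spans a face exactly when they sit over distinct original vertices that form a face of $\Delta$.

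The first step is to check that $\Delta^{(c)}$ is indeed a simplicial complex. Given $\tau \subseteq \sigma \in \Delta^{(c)}$, the projection $\pi$ restricted to $\tau$ remains injective and $\pi(\tau) \subseteq \pi(\sigma) \in \Delta$, so $\pi(\tau) \in \Delta$ since $\Delta$ is a complex, which puts $\tau$ in $\Delta^{(c)}$.

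The second step is to compute the $f$-vector of $\Delta^{(c)}$. A face of $\Delta^{(c)}$ of dimension $i-1$ (size $i$) is uniquely specified by (a) choosing a face $\sigma' \in \Delta$ of dimension $i-1$, and (b) choosing for each of the $i$ vertices of $\sigma'$ one of its $c$ copies. Hence
\[
f_{i-1}^{\Delta^{(c)}} \;=\; c^{\,i}\, f_{i-1}^{\Delta}, \qquad 0 \leq i \leq d,
\]
which gives $f^{\Delta^{(c)}}(t) = 1 + \sum_{i=1}^{d} c^{i} f_{i-1}^\Delta\, t^i = f(ct)$, as desired.

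There is no serious obstacle here; the construction is entirely elementary and the only thing to be careful about is the indexing convention (the coefficient of $t^i$ in $f(ct)$ is $c^i f_{i-1}^\Delta$, which matches the $c^i$ ways of lifting an $i$-element face of $\Delta$ to a face of $\Delta^{(c)}$). One may remark in passing that $\Delta^{(c)}$ coincides with the simplicial join-type substitution in which every vertex of $\Delta$ is replaced by the $0$-dimensional complex on $c$ vertices, so the construction is natural and well known.
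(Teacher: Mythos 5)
Your construction is essentially identical to the paper's proof: both replace each vertex of $\Delta$ by $c$ copies via the projection $\pi\colon V\times[c]\to V$ and observe that an $(i-1)$-dimensional face of $\Delta$ lifts in exactly $c^{i}$ ways, giving $f(ct)$ as the $f$-polynomial of the inflated complex. Your explicit requirement that $\pi$ be injective on each face is in fact a useful precision, since without it the inflated complex would acquire extra faces (already for $\Delta$ a single vertex), so your write-up is, if anything, slightly more careful than the paper's.
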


\begin{proof}
Let $\Delta$ be a simplicial complex on 
vertex set $[d]$. We construct a simplicial
complex $\Delta^c$ on vertex set $[d]\times [c]$.
Define $\pi$: $[d]\times [c]\rightarrow [d]$ by $\pi\big(\,(i,j)\,\big)=i$.
For $\sigma \subseteq [d]\times [c]$ we set 
$\sigma \in \Delta^c$ if and only if 
$\pi(\sigma) \in \Delta$. Thus for any $i$-simplex $\sigma\in \Delta$ there are $c^{i+1}$
simplices in $\Delta^ c$.  
It follows that if $f(x)$ is the $f$-polynomial of $\Delta$ then $f(cx)$ is the $f$-polynomial of $\Delta^c$.
\end{proof}

It is easily seen that if $(a_n)_{n \geq 0}$ is
a P\'olya-frequency sequence, then so 
is $(a_{kn})_{n \geq 0}$ for any natural number $k \geq 1$
(see e.g. \cite{Br89}). In particular by the Aissen-Edrei-Schoenberg-Whitney Theorem, if 
$f(t) = 1+f_0t+ \cdots +f_{d-1}t^{d}$ is real rooted
then so is $1+f_kt+f_{2k}t^ 2+\cdots$. 
For proving the following result we will resort
to methods from combinatorial commutative algebra 
and the theory of Gr\"obner bases. In the proof
we will use these methods freely and 
refer the reader to \cite{HH} for further
details.

\begin{proposition}
Let $f=(1, f_0,\ldots,f_{d-1})$ be the $f$-vector of a simplicial complex.
Then for any $k \geq 1$ the vector $(1, f_k,f_{2k},\ldots)$ is the $f$-vector of a simplicial complex.
\end{proposition}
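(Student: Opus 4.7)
The plan is to give an explicit combinatorial construction of the target simplicial complex $\Delta'$, which I expect to bypass the commutative-algebraic machinery suggested by the authors.

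Let $\Delta$ be on vertex set $[n]$. I will take the vertex set of $\Delta'$ to be $V = \{F \subseteq [n] : F \in \Delta, |F| = k+1\}$, which has size $f_k$. For each face $G \in \Delta$ of size $jk+1$ with $j \geq 1$, writing $G = \{g_1 < g_2 < \cdots < g_{jk+1}\}$, I associate its \emph{canonical decomposition}
\[
\pi(G) = \{B_0, B_1, \ldots, B_{j-1}\} \subseteq V, \qquad B_i = \{g_{ik+1}, g_{ik+2}, \ldots, g_{(i+1)k}\} \cup \{g_{jk+1}\}.
\]
Each $B_i$ is a $(k+1)$-subset of $G$ and hence lies in $V$; the blocks pairwise intersect only in $\{g_{jk+1}\}$ and $\bigcup_i B_i = G$. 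A $j$-subset of $V$ is declared a face of $\Delta'$ precisely when it equals $\pi(G)$ for some such $G$.

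The first step is to verify $f^{\Delta'}_{j-1} = f_{jk}^{\Delta}$ for every $j \geq 1$. This will follow from the observation that $\pi$ restricts to a bijection between the $(jk+1)$-element faces of $\Delta$ and the $(j-1)$-dimensional faces of $\Delta'$: injectivity is immediate from $G = \bigcup_i B_i$, while surjectivity is by definition. Together with $f^{\Delta'}_{-1} = 1$, this yields the target $f$-vector $(1, f_k, f_{2k}, \ldots)$.

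The main obstacle, and the only nontrivial check, is that $\Delta'$ is closed under taking subsets of faces. Given $\pi(G) = \{B_0, \ldots, B_{j-1}\}$ a face of $\Delta'$ and an $r$-subset $\{B_{i_0}, \ldots, B_{i_{r-1}}\}$ with $i_0 < \cdots < i_{r-1}$, I set $G' = \bigcup_l B_{i_l}$; then $G' \subseteq G \in \Delta$ so $G' \in \Delta$, and $|G'| = rk+1$ because the sets $B_{i_l} \setminus \{g_{jk+1}\}$ are pairwise disjoint $k$-sets while $g_{jk+1}$ is common. The crux is to show that the canonical decomposition of $G'$, computed using the ordering of $G'$'s own elements, recovers precisely the $B_{i_l}$. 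This reduces to the observation that the inequalities $i_0 < i_1 < \cdots < i_{r-1}$ force the $k$-element pieces $B_{i_l} \setminus \{g_{jk+1}\}$ to appear as consecutive groups in the natural order on $G'$, with $g_{jk+1}$ remaining the common maximum. This bookkeeping is routine but is the one place where the argument must be examined with care.
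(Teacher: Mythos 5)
Your construction is correct, and it is a genuinely different argument from the one in the paper. The paper realizes $\Delta$ algebraically as $R=S/\bigl(I_\Delta+(x_1^2,\ldots,x_n^2)\bigr)$, passes to the $k$\textsuperscript{th} Veronese subalgebra $R^{\langle k\rangle}$, and then uses a Gr\"obner degeneration: the initial ideal of the defining ideal contains the squares of the new variables, so it has the form $J'+M$ with $J'$ squarefree, and preservation of the Hilbert series under taking initial ideals yields a simplicial complex with $f$-vector $(1,f_k,f_{2k},\ldots)$. You instead build the complex $\Delta'$ explicitly on the $(k+1)$-faces of $\Delta$, taking as faces the canonical block decompositions $\pi(G)$ of the $(jk+1)$-faces $G$, where each block is a consecutive $k$-element segment of $G$ together with $\max G$. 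The crux you flag does go through: for a subfamily $\{B_{i_0},\ldots,B_{i_{r-1}}\}$ with $i_0<\cdots<i_{r-1}$, the sets $B_{i_l}\setminus\{\max G\}$ are order-intervals of $G$ occurring in increasing order, so in the induced order on $G'=\bigcup_l B_{i_l}$ they occupy positions $lk+1,\ldots,(l+1)k$ while $\max G$ remains the maximum of $G'$; hence $\pi(G')=\{B_{i_0},\ldots,B_{i_{r-1}}\}$, proving closure under subsets, and injectivity of $\pi$ (since $G=\bigcup_i B_i$ and $|\pi(G)|$ determines $|G|$) gives $f^{\Delta'}_{j-1}=f_{jk}$. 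Only trivial housekeeping is left implicit: include $\emptyset$ as a face (the $r=0$ case), and note the degenerate situation $f_k=0$, where $\Delta'=\{\emptyset\}$. Your approach buys an elementary, fully explicit and canonical complex with no dependence on a field, a term order, or commutative-algebra background; the paper's approach buys brevity for readers who already have the Stanley--Reisner/Veronese/Gr\"obner machinery at hand and fits the algebraic framework used elsewhere in that section, but it is non-constructive by comparison.
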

\begin{proof}
For this result we use the fact that $(1,f_0,\ldots, f_{d-1})$ is the $f$-vector of a simplicial complex if and only
if for any field $k$ there is a number $n$ and a squarefree monomial 
ideal $I$ in $S = k[x_1,\ldots, x_n]$ such that
the Hilbert series of $R= S/\big(\,I+(x_1^ 2,\ldots, x_n^ 2)\,\big)$ 
is $1+f_0t +\cdots +f_{d-1}t^ {d}$.  
In the latter case we have a direct sum decomposition
as $k$-vectorspaces
$R = A_0\oplus A_1\oplus \cdots \oplus A_{d-1}$ where $A_i$ is the subspace generated by the 
cosets of the homogeneous polynomials of degree $i$
and is of $k$-dimension $f_i$. 
Now $R^{\langle k \rangle} = A_0 \oplus A_k
\oplus A_{2k} \oplus \cdots$ is known as the
$k$\textsuperscript{th} Veronese algebra of $A$. 
We can write $R^{\langle k \rangle}$ as the 
image
of $$T = k\big[\,y_{\ell_1,\ldots,\ell_n}~\big|~\ell_1+\cdots +
\ell_n = k \text{ and } \ell_i \geq 1\, \big]$$
by the homomorphism $\phi$ which sends $y_{\ell_1,\ldots,\ell_n}$ to $x_1^{\ell_1}\cdots x_n^{\ell_n}$. 
Since $x_i^2 = 0$ in $R$ it follows that
all $y_{\ell_1,\ldots,\ell_n}^2$ lie in the kernel of 
$\phi$. Let $M$ be the ideal generated by
the $y_{\ell_1,\ldots,\ell_n}^2$ and
$J$ be such that $J+M = \ker(\phi)$. 
Let $\preceq$ be a term order. Then 
$T/(M+J)$ and $T/{\mathrm in}_\preceq{(M+J)}$ share
the same Hilbert series, where ${\mathrm in}_\preceq(M+J)$
denotes the initial ideal of $M+J$.
Since $M$ is a monomial ideal we clearly have
$M \subseteq {\mathrm in}_\preceq(M+J)$. Since ${\mathrm in}_\preceq(M+J)$ also is a monomial ideal it follows
that ${\mathrm in}_\preceq(J+M) = J' + M$ for a 
squarefree monomial ideal $J'$. 
From this we deduce that 
$(f_{-1},f_k,f_{2k},\ldots)$ is an $f$-vector.
\end{proof}

The fact that products of real rooted polynomials are real rooted is trivial. Also
the first part of the following proposition, which is the $f$-vector analog of this statement is well known.
The real rootedness of the Hadamard product
of two real rooted polynomials is again a known face (see e.g. \cite{GW96}). 

\begin{proposition}
Let $f(t)=1+\displaystyle{\sum_{i=1}^ {d}}  f_{i-1}t^i$ and $g(t)=1+\displaystyle{\sum_{i=1}^{d'}}  g_{i-1} t^i$ be two 
$f$-polynomials of some simplicial complexes. Then the product $f(t)g(t)$ and the Hadamard product $f(t)\circ g(t)=1+\displaystyle{\sum_{i=1}^{\min{d,d'}}} f_{i-1}g_{i-1}t^i$ are $f$-polynomials of some simplicial complexes.
\end{proposition}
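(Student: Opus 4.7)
The plan is to handle the two parts separately. For the ordinary product $f(t)g(t)$, the plan is to invoke the simplicial join: choose simplicial complexes $\Delta$ and $\Gamma$ on disjoint vertex sets with $f$-polynomials $f(t)$ and $g(t)$, and let $\Delta \ast \Gamma = \{\sigma \cup \tau \colon \sigma \in \Delta,\ \tau \in \Gamma\}$. Grouping $(k-1)$-dimensional faces of $\Delta \ast \Gamma$ by the sizes of the two pieces yields $f_{k-1}^{\Delta \ast \Gamma} = \sum_{i+j=k} f_{i-1}^\Delta g_{j-1}^\Gamma$, which is the coefficient of $t^k$ in $f(t)g(t)$. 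This part is routine.

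For the Hadamard product, the plan is to mirror the Gr\"obner basis argument of the preceding Veronese proposition, replacing the Veronese subalgebra by the Segre product. Over a field $k$, realize the two complexes via squarefree monomial ideals so that $R = S/(I + (x_\alpha^2))$ and $R' = S'/(I' + (y_\beta^2))$ have Hilbert series $f(t)$ and $g(t)$ respectively. The Segre product $R \# R' := \bigoplus_{i \ge 0} R_i \otimes_k R'_i$, with componentwise multiplication, has Hilbert series $\sum_i (\dim R_i)(\dim R'_i) t^i = f(t)\circ g(t)$, and is standard graded since both factors are. Thus $R \# R'$ is the image of $T = k[z_{\alpha,\beta}]$ under the homomorphism $\phi$ sending $z_{\alpha,\beta}$ to $\bar x_\alpha \otimes \bar y_\beta$, and each $z_{\alpha,\beta}^2$ lies in $K := \ker \phi$ because $\bar x_\alpha^2 = 0$ in $R$. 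Setting $M := (z_{\alpha,\beta}^2 \colon \alpha, \beta)$ we have $K \supseteq M$.

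The final step is to pass to an initial ideal. For any term order $\preceq$, the quotients $T/K$ and $T/\mathrm{in}_\preceq(K)$ share their Hilbert series, and $\mathrm{in}_\preceq(K) \supseteq M$. Since every monomial with a repeated variable already lies in $M$, it follows that $\mathrm{in}_\preceq(K) = M + J'$ for a squarefree monomial ideal $J'$, exhibiting $f(t) \circ g(t)$ as the $f$-polynomial of the Stanley-Reisner complex of $J'$.

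The main obstacle I expect is in the Hadamard step: confirming cleanly that $R \# R'$ is generated in degree one, so that $\phi$ is surjective, and that the containment $M \subseteq \mathrm{in}_\preceq(K)$ truly forces the non-square generators of $\mathrm{in}_\preceq(K)$ to be squarefree. Both mirror checks already carried out in the Veronese argument, but one must verify them for the bi-indexed variable set $\{z_{\alpha,\beta}\}$. The product assertion itself requires only the simplicial join and poses no real difficulty.
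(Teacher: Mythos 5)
Your argument is correct, and the ordinary-product half is exactly the paper's: the join $\Delta \ast \Gamma$ of two complexes on disjoint vertex sets has $f$-polynomial $f(t)g(t)$. For the Hadamard product, however, you take a genuinely different route. The paper stays purely combinatorial: ordering the two vertex sets linearly, it builds a complex on $\Omega \times \Omega'$ whose faces are the sets $\{(\omega_1,\omega_1'),\ldots,(\omega_i,\omega_i')\}$ with $\{\omega_1,\ldots,\omega_i\}\in\Delta$, $\{\omega_1',\ldots,\omega_i'\}\in\Delta'$ and both coordinate sequences strictly increasing; since an $i$-element face of each complex has a unique increasing ordering, such faces biject with pairs of equal-size faces, giving $f$-vector $(f_{-1}g_{-1}, f_0g_0,\ldots)$ directly. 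You instead transplant the paper's Veronese argument to the Segre product $R \# R' = \bigoplus_i R_i \otimes_k R'_i$ of the two face rings modulo squares of variables, using that the Segre product of standard graded algebras is generated in degree one (so $\phi\colon k[z_{\alpha,\beta}] \to R\#R'$ is surjective), that each $z_{\alpha,\beta}^2$ lies in $\ker\phi$, and that passing to an initial ideal preserves the Hilbert series while forcing the non-square minimal generators to be squarefree; all of these checks go through, so your proof is complete. The trade-off: your algebraic route is uniform with the Veronese proposition and needs no explicit complex, but it is non-constructive (the complex depends on the chosen term order) and relies on the standard facts about Segre products and Gr\"obner degenerations, whereas the paper's construction is elementary, explicit, and in fact can be seen as the combinatorial shadow of your Segre product under a suitable (sorting) term order.
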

\begin{proof}
    Let $\Delta$ be a simplicial complex over
    ground set $\Omega$ with
    $f$-vector $(1,f_0,\ldots,f_{d-1})$ and let 
    $\Delta'$ be a simplicial complex over
    ground set $\Omega'$ with 
    $f$-vector $(1,g_0,\ldots,g_{d'-1})$. 
 We can assume that $\Omega$ and $\Omega'$ are disjoint.
 
    \noindent {\sf Ordinary Product:} 
   The join $\Delta * \Delta'$ of
    $\Delta$ and $\Delta'$ is defined as
    the simplicial complex whose $i$-dimensional
    simplices are unions $\sigma \cup \tau$ for 
    a $j$-dimensional simplex $\sigma \in \Delta$
    and $(i-j-1)$-dimensional simplex $\tau$ of $\Delta'$
    for some $-1 \leq j \leq i$.
    By disjointness of $\Omega$ and $\Omega'$ it follows
    that the number of $i$-dimensional faces of 
    $\Delta*\Delta'$ is $\sum_{j=-1}^i f_jg_{i-j-1}$.
    The latter expression is exactly the coefficient of
    $t^i$ in $f(t)g(t)$. In particular, $f(t)g(t)$ is the $f$-polynomial of a simplicial complex.
    
    \noindent {\sf Hadamard Product:} We assume in addition that $\Omega$ and
    $\Omega'$ are linearly ordered by $<$ and $<'$ respectively. 
    Let $\Delta \cdot \Delta'$ be
    the simplicial complex over
    ground set $\Omega \times \Omega'$ where
    $\{(\omega_1,\omega_1'),\ldots, 
    (\omega_i,\omega_i')\}$ is a simplex in 
    if any only if 
    \begin{itemize}
    \item $\{\omega_1,\ldots,\omega_i\} \in \Delta$
    and $\{\omega_1',\ldots, \omega_i'\}\in \Delta'$
    and
    \item $\omega_1 < \cdots < \omega_i$ and
    $\omega_1' <' \cdots <' \omega_i'$.
    \end{itemize}
    One easily checks that $\Delta \cdot \Delta'$
    is a simplicial complex with $f$-vector
    $(f_{-1}g_{-1},f_0g_0,\cdots)$.
\end{proof}

\section*{Acknowledgments}
We thank Petter Br\"and\'en for informing us about the connection of an 
earlier version of \ref{xk-decreasing} and Lorentzian polynomials. 

The first author was partially supported by the National Natural Science Foundation of China (Grant Nos. 12271222), Scientific Research Foundation of Jiangsu Normal University (Grant Nos. 21XFRS019) and the China Scholarship Council.

\end{document}